\newtheorem{theorem}{Theorem}[section]
\newtheorem{lemma}[theorem]{Lemma}
\newtheorem{corollary}[theorem]{Corollary}
\theoremstyle{definition}
\newtheorem{definition}[theorem]{Definition}
\newtheorem{example}[theorem]{Example}
\newcommand{\abs}[1]{\left|#1\right|}
\journal{Linear Algebra and its Applications}
\begin{document}

\begin{frontmatter}



\title{$m$th roots of $H$-selfadjoint matrices}


\author[label1]{G.J.~Groenewald} 
\author[label1]{D.B.~Janse van Rensburg}
\author[label2]{A.C.M.~Ran}
\author[label1]{F.~Theron}
\author[label1,label3]{M.~van~Straaten}

\address[label1]{School~of~Mathematical~and~Statistical~Sciences,
North-West~University,
Research Focus: Pure and Applied Analytics,
Private~Bag~X6001,
Potchefstroom~2520,
South Africa.
E-mail: \texttt{gilbert.groenewald@nwu.ac.za, dawie.jansevanrensburg@nwu.ac.za, frieda.theron@nwu.ac.za, madelein.vanstraaten@nwu.ac.za}}
\address[label2]{Department of Mathematics, Faculty of Science, VU Amsterdam, De Boelelaan
    1111, 1081 HV Amsterdam, The Netherlands
    and Research Focus: Pure and Applied Analytics, North-West~University,
Potchefstroom,
South Africa. E-mail:
    \texttt{a.c.m.ran@vu.nl}}
\address[label3]{DSI-NRF Centre of Excellence in Mathematical and Statistical Sciences (CoE-MaSS)}

\begin{abstract}
In this paper necessary and sufficient conditions are given for the existence of an $H$-selfadjoint $m$th root of a given $H$-selfadjoint matrix. A construction is given of such an $H$-selfadjoint $m$th root when it does exist.
\end{abstract}



\begin{keyword}
Indefinite inner product space \sep $H$-selfadjoint matrices \sep roots of matrices \sep canonical forms

\textit{AMS subject classifications:} 15A16 \sep 15A63 \sep 47B50

\end{keyword}

\end{frontmatter}


\section{Introduction and Preliminaries}
Let $H$ be an invertible $n\times n$ Hermitian matrix. On $\mathbb{C}^n$ we consider the indefinite inner product generated by $H$, given by $[x,y]=\langle Hx,y\rangle$, where $\langle \cdot\, ,\cdot \rangle$ denotes the standard inner product. Linear algebra in spaces with an indefinite inner product has been an area of active research over the past few decades, and many basic elements of the theory are summarized in \cite{GLR}. An $n\times n$ matrix $B$ is called $H$-selfadjoint if it is selfadjoint in the indefinite inner product given by $H$, or equivalently, if $HB=B^*H$.
The problem studied in this paper is that of finding $H$-selfadjoint $m$th roots of a given $H$-selfadjoint matrix $B$. This problem has been investigated in \cite{BMRRR} for $m=2$ where it plays a role in polar decompositions in an indefinite inner product space. Stability of $H$-selfadjoint square roots was studied in \cite{MRR}.

A matrix $A$ is called an $m$th root of a matrix $B$ if $A^m=B$. The problem of finding $m$th roots of a given matrix has been studied in the past; a first characterization can be found in the book by Wedderburn \cite{Wedderburn}, and another characterization in \cite{Psarrakos}. In the book by Gantmacher \cite{Gantm} it is shown for the singular case that the function of taking $m$th roots can be applied to each Jordan block.
Matrix $m$th roots have been studied extensively, see for example \cite{CrossLan,Higham,Smith,tenHave}.

Obviously, in the case where $B$ is $H$-selfadjoint, a necessary condition for existence of an $H$-selfadjoint $m$th root is the existence of an $m$th root. Thus, this paper will focus on the extra conditions needed for the existence of an $H$-selfadjoint $m$th root. In \cite{MRR} existence and uniqueness of $H$-selfadjoint square roots of an $H$-selfadjoint matrix are studied, along with stability of such square roots when they exist. 

More restrictive conditions on the $H$-selfadjoint square root can be imposed, for instance it is natural to impose the condition that the eigenvalues of $A$ are in the open right half-plane, possibly including zero as well. In more generality than in the present setting such polar decompositions and square roots have been studied extensively in a sequence of papers by Higham, Mackey, Mackey, Mehl and Tisseur \cite{HMMT,HMT,MMT}. It turns out that imposing this extra condition on the eigenvalues of the square root leads to a square root which is unique and is computable using iterative methods. However, it restricts the class of matrices for which such a square root exists to those for which the structure of the zero eigenvalue is semisimple (if one includes the possibility of zero being an eigenvalue, but insists on uniqueness) or to those which are non-singular (if one insists on the eigenvalues of the square root lying in the open right half-plane). 
Square roots for other classes of structured matrices have been considered as well in \cite{HMMT,HMT,MMT}; see also \cite{FMMX} for the case of Hamiltonian square roots of skew-Hamiltonian matrices.

\subsection{Notation}

The notation $\langle\cdot\,,\cdot\rangle$ stands for the standard inner product in either $\mathbb{C}^n$ or $\mathbb{R}^n$, i.e., 
$$
\langle x,y\rangle = \sum_{j=1}^{n} x_j\bar{y}_j,
$$
where $x=\begin{bmatrix}x_1 & \cdots & x_n\end{bmatrix}^T,\,\,y=\begin{bmatrix}y_1 & \cdots & y_n\end{bmatrix}^T \in\mathbb{C}^n$ or $\mathbb{R}^n$. The following definition and notation is taken from \cite{GLR}. A function $[\cdot\,,\cdot]$ from $\mathbb{C}^n \times \mathbb{C}^n$ to $\mathbb{C}$ is called an indefinite inner product in $\mathbb{C}^n$ if it is linear in the first argument, anti-symmetric and nondegenerate. 
Therefore, the only possible difference with the standard inner product is that $[x,x]$ may be nonpositive for $x\neq 0$. Clearly, for every $n\times n$ invertible Hermitian matrix $H$ (or real symmetric $H$) the formula $[x,y] = \langle Hx,y\rangle,\,\, x,y\in\mathbb{C}^n$, defines an indefinite inner product on $\mathbb{C}^n$. Conversely, for any indefinite inner product $[\cdot\,,\cdot]$ on $\mathbb{C}^n$, there exists an invertible Hermitian matrix $H$ such that $[x,y]=\langle Hx,y\rangle$ for all $x,y\in\mathbb{C}^n$. 

The $H$-adjoint of a square matrix $A$, denoted by $A^{[*]}$, is the unique square matrix such that $[Ax,y]=[x,A^{[*]}y]$ for all $x,y\in\mathbb{C}^n$. Observe that $A^{[*]} =H^{-1}A^*H$.

We denote a single $n\times n$ Jordan block with eigenvalue $\lambda\in\mathbb{C}$ by
$$
J_n(\lambda) = \begin{bmatrix}\lambda & 1  & 0 & \cdots & 0 \\0 & \lambda & 1 & \ddots & \vdots \\ \vdots & \ddots & \ddots & \ddots & 0 \\ 
\vdots & &\ddots& \lambda & 1 \\ 0 & 0 & \cdots & 0 & \lambda \end{bmatrix}.
$$
We will use the standard notation $\sigma(A)$ for the spectrum of $A$, i.e.\ for the set of eigenvalues of a matrix $A$, including nonreal eigenvalues of real matrices. Furthermore, we denote by $Q_n$ the $n\times n$ matrix with ones on the main anti-diagonal, which is called the backward identity matrix, or standard involutionary permutation (sip) matrix. 

We need the following well-known notation and result on Hermitian matrices. The inertia of a Hermitian matrix $H$ is a triple consisting of the number of positive, negative and zero eigenvalues, respectively, and will be denoted by $(i_+(H), i_-(H),i_0(H))$. According to Sylvester's law of inertia two Hermitian matrices have the same inertia if and only if they are congruent, see \cite{LanTis}.

\subsection{Important concepts}
A subspace $\mathcal{M}$ of $\mathbb{C}^n$ is called $H$\textit{-nondegenerate} if $x\in\mathcal{M}$ and $[x,y]=0$ for all $y\in\mathcal{M}$ implies that $x=0$. If $[x,y]=0$ for all $x,y\in\mathcal{M}$, then $\mathcal{M}$ is called $H$\textit{-neutral}.

A complex matrix $A$ is $H$\textit{-selfadjoint} if $A^{[*]}=A$, that is, if $HA = A^*H$. Thus, any $H$-selfadjoint matrix $A$ is similar to $A^*$. If we consider, for example, a single Jordan block $J_n(\lambda)$ with real eigenvalue $\lambda$ and if $Q_n$ is as defined above, then $J_n(\lambda)$ is $Q_n$-selfadjoint. Furthermore, the spectrum $\sigma(A)$ of an $H$-selfadjoint matrix $A$ is symmetric relative to the real axis. Also, the sizes of the Jordan blocks in the Jordan normal form of $A$ with eigenvalue $\lambda$ are equal to the sizes of the Jordan blocks with eigenvalue $\bar{\lambda}$. A proof of this result can be found in \cite[Proposition 4.2.3]{GLR}. 

A matrix $A$ is called $H$\textit{-unitary} if $A$ is invertible and $A^{[*]}=A^{-1}$, i.e., $A^*HA=H$. The pairs $(A_1,H_1)$ and $(A_2,H_2)$ are said to be \textit{unitarily similar} (following the terminology of \cite{GLR}) if there exists an invertible matrix $S$ such that $A_2=S^{-1}A_1S$ and $H_2=S^*H_1S$. If $H=H_1=H_2$, then $S$ is $H$-unitary and we say that $A_1$ and $A_2$ are $H$\textit{-unitarily similar}. Note that if $(A_1,H_1)$ and $(A_2,H_2)$ are unitarily similar, it implies that $A_1$ is $H_1$-selfadjoint if and only if $A_2$ is $H_2$-selfadjoint.

If a matrix $A$ is $H$-selfadjoint, then any power $A^k$ of $A$ is also $H$-selfadjoint since if we use $HA=A^*H$ repeatedly we have 
\begin{equation*}
HA^k=(HA)A^{k-1}=A^*HA^{k-1}=(A^*)^2HA^{k-2}=\cdots=(A^*)^kH=(A^k)^*H,
\end{equation*}
for any positive integer $k$.

\subsection{Canonical form}

The following theorem for the canonical form of $H$-selfadjoint matrices is taken from \cite{BMRRR}.

\begin{theorem}\label{ThmCanonform}
Let $H$ be an invertible Hermitian $n\times n$ matrix over the field $\mathbb{C}$, and let $A$ be an $n\times n$ $H$-selfadjoint matrix over $\mathbb{C}$. Then there exists an invertible $n\times n$ matrix $S$ over $\mathbb{C}$ such that $S^{-1}AS$ and $S^*HS$ have the form
\begin{eqnarray}\label{canonicalC}
S^{-1}AS &=& J_{k_1}(\lambda_1) \oplus \cdots \oplus J_{k_{\alpha}}(\lambda_{\alpha})\nonumber \\
&\oplus& [J_{k_{\alpha+1}}(\lambda_{\alpha+1}) \oplus J_{k_{\alpha+1}}(\overline{\lambda}_{\alpha+1})] \oplus \cdots \oplus 
[J_{k_{\beta}}(\lambda_{\beta}) \oplus J_{k_{\beta}}(\overline{\lambda}_{\beta})],
\end{eqnarray}
where $\lambda_1,\hdots,\lambda_{\alpha}$ are real and $\lambda_{\alpha+1},\hdots,\lambda_{\beta}$ are nonreal with positive imaginary parts; and
\begin{equation}\label{canonicalH}
S^*HS = \varepsilon_1 Q_{k_1} \oplus \cdots \oplus \varepsilon_{\alpha}Q_{k_{\alpha}} \oplus Q_{2k_{\alpha+1}} \oplus \cdots \oplus Q_{2k_{\beta}},
\end{equation}
where $\varepsilon_1,\hdots,\varepsilon_{\alpha}$ are $\pm 1$. For a given pair $(A,H)$, where $A$ is $H$-selfadjoint, the canonical form \eqref{canonicalC} and \eqref{canonicalH} is unique up to permutation of orthogonal components in \eqref{canonicalH} and the same simultaneous permutation of the corresponding blocks in \eqref{canonicalC}.
\end{theorem}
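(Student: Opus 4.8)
The plan is to follow the classical strategy for simultaneous canonical forms of a pair $(A,H)$ with $A$ being $H$-selfadjoint: first reduce to one eigenvalue at a time by an $H$-orthogonal spectral decomposition, then treat a single real eigenvalue by a Jordan-chain argument that simultaneously produces the signs $\varepsilon_i$, then treat a conjugate pair of nonreal eigenvalues by a duality argument, and finally establish uniqueness by recovering the $\varepsilon_i$ as signatures of explicit congruence-invariant Hermitian forms built from $(A,H)$. For the first step, the identity $[A^kx,y]=[x,A^ky]$ forces the root subspaces $\mathcal{R}_\mu(A)$ and $\mathcal{R}_\nu(A)$ to be $H$-orthogonal whenever $\mu\neq\bar\nu$, since on $\mathcal{R}_\mu$ the operator $A-\mu I$ is nilpotent while $A-\bar\nu I$ is invertible there. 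Hence $\mathbb{C}^n$ decomposes as an $H$-orthogonal direct sum of the $A$-invariant, $H$-nondegenerate subspaces $\mathcal{R}_\lambda$ (for real $\lambda\in\sigma(A)$) and $\mathcal{R}_\lambda\oplus\mathcal{R}_{\bar\lambda}$ (for nonreal $\lambda$ with $\operatorname{Im}\lambda>0$), with $H$ restricting to an invertible Hermitian form on each summand, so it suffices to prove the theorem on each summand separately.

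For a single real eigenvalue $\lambda$, replace $A$ by $A-\lambda I$ (still $H$-selfadjoint) to assume $A$ nilpotent, and let $p$ be the largest Jordan block size. The form $(x,y)\mapsto[A^{p-1}x,y]=\langle HA^{p-1}x,y\rangle$ is Hermitian, because $HA^{p-1}=(A^*)^{p-1}H=(HA^{p-1})^*$, and it is not identically zero since $H$ is invertible and $A^{p-1}\neq0$; by polarization there is a vector $x$ with $[A^{p-1}x,x]\neq0$, so after rescaling $\varepsilon:=[A^{p-1}x,x]\in\{1,-1\}$. The Gram matrix of $x,Ax,\dots,A^{p-1}x$ with respect to $[\cdot\,,\cdot]$ has $(a,b)$-entry $[A^{a+b-2}x,x]$, read as $0$ when $a+b-2\geq p$, so it is anti-triangular with the constant $\varepsilon$ on the main anti-diagonal and zeros below it, hence invertible; in particular the cyclic subspace $\mathcal{N}=\operatorname{span}\{x,Ax,\dots,A^{p-1}x\}$ is $H$-nondegenerate. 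A standard recursive correction of the chain generator — replacing $x$ by $x+c_1Ax+\cdots+c_{p-1}A^{p-1}x$ and solving for the $c_j$ one anti-diagonal at a time — kills all entries above the main anti-diagonal and brings the Gram matrix to exactly $\varepsilon Q_p$ while preserving $A|_{\mathcal{N}}=J_p(\lambda)$. Passing to $\mathcal{N}^{[\perp]}$, which is $A$-invariant and $H$-nondegenerate of strictly smaller dimension, and inducting on the dimension, yields the forms \eqref{canonicalC}--\eqref{canonicalH} on $\mathcal{R}_\lambda$.

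For a conjugate pair, with $\lambda$ nonreal, both $\mathcal{R}_\lambda$ and $\mathcal{R}_{\bar\lambda}$ are $H$-neutral — each is $H$-orthogonal to every root subspace except the other — so the pairing $(x,y)\mapsto[x,y]$ on $\mathcal{R}_\lambda\times\mathcal{R}_{\bar\lambda}$ is nondegenerate, and by \cite[Proposition 4.2.3]{GLR} the Jordan structures of $A$ at $\lambda$ and at $\bar\lambda$ coincide. Fixing a Jordan basis of $A|_{\mathcal{R}_\lambda}$, taking the basis of $\mathcal{R}_{\bar\lambda}$ dual to it under the pairing, and then cleaning up within chains as in the real case, identifies $A|_{\mathcal{R}_{\bar\lambda}}$ with the matching Jordan blocks and puts $H$ into the blocks $J_k(\lambda)\oplus J_k(\bar\lambda)$ with $H$-part $Q_{2k}$; no sign survives here, since a nonzero scalar can always be absorbed into one of the two dual bases. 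For uniqueness, the eigenvalues and the Jordan block sizes are determined by $A$ alone, hence are unaffected by the congruence $(A,H)\mapsto(S^{-1}AS,S^*HS)$. The signs are recovered from $(A,H)$: the block computation above shows that for each real $\lambda$ and each $k\geq1$,
\[
\operatorname{sig}\!\left([(A-\lambda I)^{k-1}\,\cdot\,,\,\cdot\,]\big|_{\ker(A-\lambda I)^{k}}\right)=n_k^{+}(\lambda)-n_k^{-}(\lambda),
\]
where $n_k^{\pm}(\lambda)$ is the number of size-$k$ Jordan blocks at $\lambda$ carrying sign $\pm1$; the left-hand side is a congruence invariant, and combined with the known total $n_k^{+}(\lambda)+n_k^{-}(\lambda)$ it determines the multiset of triples $\{(\lambda_i,k_i,\varepsilon_i)\}$, which is exactly uniqueness up to permutation.

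I expect the main obstacle to be the honest handling of the sign characteristic: in the existence part, checking that the recursive chain clean-up really can be pushed through to reach precisely $\varepsilon Q_p$ (and not merely some anti-triangular normal form), and in the uniqueness part, the bookkeeping that converts the signature identities into the statement that the whole multiset of signs is uniquely determined.
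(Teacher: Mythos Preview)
The paper does not actually prove Theorem~\ref{ThmCanonform}; it states the result and refers the reader to Chapter~5 of \cite{GLR}, noting that the canonical form goes back to Weierstrass and Kronecker. Your proposal is precisely the classical proof that one finds there: the $H$-orthogonal root-subspace decomposition, the Jordan-chain construction with recursive clean-up to produce $\varepsilon Q_p$ on a cyclic piece at a real eigenvalue, the dual-basis argument for a conjugate pair, and the signature identity
\[
\operatorname{sig}\!\left([(A-\lambda I)^{k-1}\,\cdot\,,\,\cdot\,]\big|_{\ker(A-\lambda I)^{k}}\right)=n_k^{+}(\lambda)-n_k^{-}(\lambda)
\]
for uniqueness. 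The argument is correct as sketched, including the two points you flagged: the clean-up does terminate at exactly $\varepsilon Q_p$ because each $[A^{j}x',x']$ is real (the form $[A^j\cdot,\cdot]$ is Hermitian) and depends affinely on $\operatorname{Re}c_{p-1-j}$ with nonzero coefficient $2\varepsilon$, so the anti-diagonals can be killed one at a time from the top down; and the signature identity holds block by block since for a single size-$k$ block the form $[(A-\lambda)^{j-1}\cdot,\cdot]$ on $\ker(A-\lambda)^{j}$ vanishes unless $j=k$, in which case it has rank one and sign $\varepsilon$. So your write-up supplies what the paper deliberately outsources.
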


The theorem is well-known and can be traced back to Weierstrass and Kronecker, see e.g.\ Chapter 5 in \cite{GLR}, and the references given there.

The signs $\varepsilon_1,\ldots,\varepsilon_\alpha$ in \eqref{canonicalH} form the \emph{sign characteristic} of the pair $(A,H)$. Therefore, the sign characteristic consists of signs $+1$ or $-1$ attached to every partial multiplicity (equivalently, the size of a real Jordan block in the Jordan normal form) of $A$ corresponding to a real eigenvalue. If $\varepsilon_i=1$ (resp.\ $\varepsilon_i=-1$) for some $i$, we say $\varepsilon_iQ_{k_i}$ is a \textit{positive} (resp.\ \textit{negative}) \textit{block} in $S^*HS$.

\subsection{The main theorem}
\label{SecMAIN}
We need a definition and more notation before stating the main result. Recall the following definition from \cite{Shapiro}.
\begin{definition} 
Let $A$ be a square matrix with Jordan blocks $\bigoplus_{i=1}^r J_{n_i}(\lambda)$ at $\lambda$ in its Jordan normal form  and assume that $n_1\geq n_2\geq n_3\geq\ldots\geq n_r>0$. The \textit{Segre characteristic} of $A$ corresponding to the eigenvalue $\lambda$ is defined as the sequence
$n_1,n_2,n_3,\ldots,n_r,0,0,\ldots$.
\end{definition}

Throughout the paper this sequence will only be used when looking at the Jordan blocks associated with the eigenvalue zero. Let $n'_1,n'_2,\ldots,n'_{pm},0,\ldots$ be some reordering of the Segre characteristic of a nilpotent $H$-selfadjoint matrix $B$, where $p$ is the number of nonzero $m$-tuples in this reordering. The $k$th $m$-tuple is $$(n'_{m(k-1)+1},n'_{m(k-1)+2},\ldots,n'_{mk}).$$ 
Now let 
\begin{equation}\label{Bset}
\mathcal{B}_{\nu}^{(k)}:=\{i\mid n'_i=\nu\,;\;m(k-1)+1\leq i\leq mk\},
\end{equation}
so that $\big|\mathcal{B}_{\nu}^{(k)}\big|$ represents the number of blocks in $H$ (or $A$) of size $\nu$ which corresponds to the $k$th $m$-tuple, i.e.\ $k=1,\ldots,p$ and $\nu$ can be any number in the Segre characteristic of $B$.

Let us partition the canonical form $(J,H_B)$ of $(B,H)$ as follows:
\[J=J_0\oplus J_1\oplus J_2\quad\textup{and}\quad H_B=H_0\oplus H_1\oplus H_2,\]
where $J_0$ is a direct sum of blocks of the form $J_{k_j}(0)$ for some $k_j$, $J_1$ is a direct sum of blocks of the form $J_{k_j}(\alpha_j)$ for some $k_j$ and $\alpha_j<0$, and $J_2$ is a matrix in Jordan normal form with all eigenvalues not in $(-\infty,0]$, and where the matrices $H_0$, $H_1$ and $H_2$ correspond to the matrices $J_0$, $J_1$ and $J_2$.

In the next section, the search for necessary and sufficient conditions for existence of an $H$-selfadjoint $m$th root is reduced to the treatment of the same problem for the pairs $(J_0,H_0)$, $(J_1,H_1)$ and $(J_2,H_2)$ separately. These cases are then studied, and the results are summarized in the following main theorem. In addition, for each of these cases, in the next section an explicit construction is given in the case that an $H$-selfadjoint $m$th root exists.
\begin{theorem}\label{ThmMainThm}
Let $B$ be an $H$-selfadjoint matrix. Then there exists an $H$-selfadjoint matrix $A$ such that $A^m=B$ if and only if the canonical form of $(B,H)$, given by $(J,H_B)$, has the following properties:
\begin{enumerate}
\item There exists a reordering, $n_1',n'_2,\ldots,n_{pm}',0,\ldots$, of the Segre characteristic corresponding to the zero eigenvalue of $B$ such that for all $k$ the $m$-tuple $(n_{m(k-1)+1}',\ldots,n_{mk}')$ is descending and the difference between $n_{m(k-1)+1}'$ and $n_{mk}'$ is at most one. 
\item For some reordering satisfying the first property, the number of positive blocks in $H_0$ of size $\nu$ is equal to $\sum_{k=1}^p \pi_{\nu,k}$ where
\begin{equation*}
\pi_{\nu,k}=\begin{cases}
\frac{1}{2}\left(\big|\mathcal{B}_\nu^{(k)}\big|\right) & \textit{if }\big|{\mathcal{B}_\nu^{(k)}}\big|\textit{ is even} \\
\frac{1}{2}\left(\big|{\mathcal{B}_\nu^{(k)}}\big|+\eta_k\right) & \textit{if }\big|{\mathcal{B}_\nu^{(k)}}\big|\textit{ is odd}
\end{cases},
\end{equation*}
with $\eta_k$ either equal to $1$ or $-1$. 
\item The blocks in $J_1$ and $H_1$ for an even $m$, can be reordered in the following way
\begin{equation*}
J_1=\bigoplus_{j=1}^t\left(J_{k_j}(\alpha_j)\oplus J_{k_j}(\alpha_j)\right)
\quad\textit{and}\quad
H_1=\bigoplus_{j=1}^t\left(Q_{k_j}\oplus(-Q_{k_j})\right).
\end{equation*}
\end{enumerate}
\end{theorem}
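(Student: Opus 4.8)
### Proof Strategy

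\textbf{Reduction to blocks.} The plan is to first justify that the problem decomposes. If $A^m = B$ with $A$ being $H$-selfadjoint, then $A$ and $B$ commute, so $A$ preserves the spectral subspaces of $B$. Since the spectrum of an $H$-selfadjoint matrix is symmetric about the real axis, one can group the eigenvalues of $B$ into three $A$-invariant, $H$-nondegenerate pieces: the zero eigenvalue (giving $(J_0,H_0)$), the negative real eigenvalues (giving $(J_1,H_1)$), and everything else (giving $(J_2,H_2)$). Standard facts from \cite{GLR} about restrictions of $H$-selfadjoint matrices to $H$-nondegenerate invariant subspaces show that the pair $(B,H)$ has an $H$-selfadjoint $m$th root if and only if each of $(J_0,H_0)$, $(J_1,H_1)$, $(J_2,H_2)$ does. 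Conversely, an $m$th root of $B$ built blockwise will map each spectral subspace into itself only if the eigenvalues of the roots are chosen compatibly — but for $J_2$ one picks the principal branch (eigenvalues off $(-\infty,0]$), for $J_1$ a complex-conjugate pair of branches, and for $J_0$ one works directly with the nilpotent structure; so the blockwise roots do assemble into a genuine $m$th root of $B$. This reduction is essentially the content of the next section referenced in the excerpt, so I would only sketch it and then treat the three cases.

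\textbf{The $J_2$ case and the $J_1$ case.} For $(J_2,H_2)$, since no eigenvalue lies on $(-\infty,0]$, the principal $m$th root $f(z)=z^{1/m}$ is analytic on a neighbourhood of $\sigma(J_2)$ that is symmetric under conjugation and real on the reals; applying it via the Jordan/analytic functional calculus, $A_2 = f(J_2)$ is a polynomial in $J_2$ with real coefficients on the real part of the spectrum, hence $H_2 A_2 = A_2^* H_2$ follows from $H_2 J_2 = J_2^* H_2$ by the same telescoping computation shown in the excerpt for powers (here applied to a polynomial). So $(J_2,H_2)$ \emph{always} has an $H$-selfadjoint $m$th root — no extra condition. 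For $(J_1,H_1)$: when $m$ is odd, $z^{1/m}$ extends analytically across the negative reals (there is a real odd root), so again a root always exists; when $m$ is even, a negative eigenvalue $\alpha$ has no real $m$th root, and an $H$-selfadjoint $A_1$ with $A_1^m=J_1$ must have $\sigma(A_1)$ symmetric about $\mathbb R$, forcing the nonreal $m$th roots of $\alpha$ to appear in conjugate pairs; matching Jordan structure and sign characteristic across a conjugate pair $J_{k}(\mu)\oplus J_{k}(\bar\mu)$ forces $H_1$ to look like $Q_k\oplus(-Q_k)$ on that pair. Conversely, given the paired form in condition~3, set $A_1 = \bigoplus_j (g(J_{k_j}(\alpha_j)) \oplus \overline{g(J_{k_j}(\alpha_j))})$ for a fixed analytic branch $g$ of $z^{1/m}$ near $\alpha_j$; a direct check using $Q_k\oplus(-Q_k)$ shows this is $H_1$-selfadjoint. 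So condition~3 is exactly the $(J_1,H_1)$ condition, and I would prove it by this spectral-symmetry argument plus the explicit construction.

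\textbf{The $J_0$ case — the crux.} The real work is the nilpotent pair $(J_0,H_0)$, which produces conditions~1 and~2. First, the underlying (non-$H$) statement: a nilpotent $J_0$ has a nilpotent $m$th root iff its Segre characteristic can be reordered into $m$-tuples each of which is descending with first-minus-last at most one — this is the Gantmacher/Shapiro-type description of what Jordan structures arise from $A\mapsto A^m$ on a single nilpotent block (an $\ell\times\ell$ nilpotent Jordan block raised to the $m$th power splits into $m$ nilpotent blocks whose sizes are $\lceil \ell/m\rceil$ repeated $\ell\bmod m$ times and $\lfloor\ell/m\rfloor$ repeated the rest), so condition~1 is forced and is sufficient for the unstructured problem. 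The new ingredient is tracking the sign characteristic. The idea: if $A$ is $H$-selfadjoint nilpotent, restrict attention to the $H$-nondegenerate invariant subspace spanned by the Jordan chain of a single block $J_\ell(0)$ of $A$; on it, $(A, H)$ is $(J_\ell(0), \pm Q_\ell)$ for a single sign $\eta$. Then compute the sign characteristic of $(A^m, H)$ restricted there: $A^m$ splits this block into the $m$ subblocks described above, and one must determine the sign attached to each resulting real Jordan block of $A^m$. The key computational lemma is that for $(J_\ell(0),\eta Q_\ell)$, the sign characteristic of the pair $(J_\ell(0)^m, \eta Q_\ell)$ assigns signs to its $m$ constituent blocks in an alternating-in-$\eta$ pattern; summing over a chosen $m$-tuple of $A$-block sizes $(n'_{m(k-1)+1},\dots,n'_{mk})$, the count of $+$ blocks of each size $\nu$ coming out is $\tfrac12|\mathcal B_\nu^{(k)}|$ when that count is even, and $\tfrac12(|\mathcal B_\nu^{(k)}|\pm 1)$ when odd, the $\pm$ being a single global choice $\eta_k$ per $m$-tuple (it is the sign $\eta$ of the ambient $A$-block, or rather a derived invariant). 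Summing over $k=1,\dots,p$ gives condition~2. I expect this sign-bookkeeping lemma — precisely computing the sign characteristic of $(N^m, \eta Q_\ell)$ where $N=J_\ell(0)$ — to be the main obstacle; it likely requires an explicit congruence diagonalizing $\eta Q_\ell$ against the Jordan basis of $N^m$, or an inductive/perturbation argument tracking signs. For necessity one runs this in reverse: given an $H$-selfadjoint $A$ with $A^m=B$, decompose $A$ into its nilpotent Jordan blocks grouped with their signs, read off that $B$'s Segre characteristic falls into the required $m$-tuples (condition~1) with the required positive-block counts (condition~2); for sufficiency, given a reordering realizing conditions~1 and~2 with the $\eta_k$, build $A$ block-by-block as a direct sum of pairs $(J_{\ell_k}(0), \eta_k Q_{\ell_k})$ (choosing $\ell_k$ as the sum of the $k$th $m$-tuple) and verify via the lemma that $(A^m, \bigoplus \eta_k Q_{\ell_k})$ is $H_0$-unitarily similar to $(J_0, H_0)$, using Theorem~\ref{ThmCanonform} to match canonical forms.
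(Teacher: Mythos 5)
Your overall architecture coincides with the paper's: reduce to the canonical form via Lemma~\ref{S-1ASlemma}, split off the spectral subspaces for $0$, the negative reals, and the rest, and handle each piece. Your use of the conjugate-symmetric functional calculus for $(J_2,H_2)$ and for odd roots of $(J_1,H_1)$ is legitimate and is explicitly acknowledged by the paper as an alternative to its hands-on Jordan-basis construction. The substance of the theorem, however, lies in two sign-characteristic computations, and here your proposal has one concrete error and two deferred steps. The error is in the sufficiency direction of condition~3: for a nonreal branch $g$ of $z^{1/m}$ near $\alpha_j<0$, the matrix $T\oplus\overline{T}$ with $T=g(J_{k}(\alpha_j))$ is \emph{not} $(Q_{k}\oplus(-Q_{k}))$-selfadjoint. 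Since $Q_kTQ_k=T^{T}$ for upper triangular Toeplitz $T$, the identity $Q_kT=T^{*}Q_k$ forces $T=\overline{T}$, i.e.\ $T$ real, which fails because its diagonal entry $g(\alpha_j)$ is nonreal. What is true is that $T\oplus\overline{T}$ is $Q_{2k}$-selfadjoint (the sip matrix that swaps the two blocks), and one must then prove — this is Lemma~\ref{LemOneBlockNegEig}, established in the paper by an inertia argument applied to $Q_{2k}(B-\lambda I)^{k-1}$ — that the pair $\bigl((T\oplus\overline{T})^{m},Q_{2k}\bigr)=\bigl(J_k(\alpha_j)\oplus J_k(\alpha_j),Q_{2k}\bigr)$ is unitarily similar to $\bigl(J_k(\alpha_j)\oplus J_k(\alpha_j),Q_k\oplus(-Q_k)\bigr)$, after which Lemma~\ref{S-1ASlemma} transports the root. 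Your necessity argument for condition~3 implicitly relies on this same computation ("matching \ldots sign characteristic \ldots forces"), so it is not optional.

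The second deferred step is the one you correctly flag as the crux: the sign characteristic of $\bigl(J_\ell(0)^{m},\eta Q_\ell\bigr)$. Your predicted answer (half the blocks of each size get sign $\eta$, with the odd one out getting $\eta$, uniformly over the $m$-tuple) is the right statement, and the paper proves it not by a congruence diagonalization but by exhibiting a new Jordan basis for $J_\ell(0)^{m}$: the standard chains $C_1,\dots,C_m$ are $\eta Q_\ell$-neutral except possibly one, and replacing each neutral pair $C_i,C_j$ of equal length by $(C_i\pm C_j)/\sqrt{2}$ produces chains spanning nondegenerate subspaces carrying opposite signs $\pm\eta$, while the unpaired chain (present exactly when the count is odd) carries sign $\eta$; counting via $\mathcal{B}_\nu^{(k)}$ then yields condition~2. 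Until these two lemmas are actually proved, your proposal is a correct plan rather than a proof; with them supplied (and the $J_1$ sufficiency repaired as above), it matches the paper's argument.
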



\section{Existence of an $H$-selfadjoint $m$th root}
Given an $H$-selfadjoint matrix $B$, we are interested in finding $H$-selfadjoint $m$th roots of $B$, if they exist. It is well-documented (for example in \cite[p.\ 461]{HornJohn}) that for finding $m$th roots, one may limit oneself to finding the $m$th root of the Jordan normal form of any matrix. We show that the same is true for finding $H$-selfadjoint $m$th roots.

Consider the following result which implies that it is sufficient to start with the pair $(B,H)$ in canonical form. It shows how $m$th roots of matrices which are part of pairs in the same equivalent class are related.

\begin{lemma}\label{S-1ASlemma}
Let the pair $(X,H_X)$ be unitarily similar to the pair $(Y,H_Y)$ where $H_X$ and $H_Y$ are invertible Hermitian matrices, i.e.\ there exists an invertible matrix $P$ such that
\begin{equation}\label{S*HAS=H}
P^{-1}XP=Y, \quad\textit{and}\quad
P^*H_XP=H_Y.
\end{equation}
Let the matrix $\tilde{A}$ be an $H_X$-selfadjoint $m$th root of $X$. Then the matrix $A:=P^{-1}\tilde{A}P$ is an $H_Y$-selfadjoint $m$th root of $Y$.
\end{lemma}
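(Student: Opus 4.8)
The plan is to verify the two claims directly: first that $A:=P^{-1}\tilde A P$ is an $m$th root of $Y$, and second that $A$ is $H_Y$-selfadjoint. The first is a one-line computation: $A^m=(P^{-1}\tilde A P)^m=P^{-1}\tilde A^m P=P^{-1}XP=Y$, using the telescoping of the conjugation and the hypothesis $\tilde A^m=X$ together with $P^{-1}XP=Y$ from \eqref{S*HAS=H}.

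For the second claim I would start from the defining relation of $H_X$-selfadjointness for $\tilde A$, namely $H_X\tilde A=\tilde A^* H_X$, and conjugate it into the $Y$-world. The clean way is to substitute $\tilde A = PAP^{-1}$ and $H_X = (P^*)^{-1}H_Y P^{-1}$ (both rearrangements of \eqref{S*HAS=H}) into $H_X\tilde A=\tilde A^*H_X$, then simplify: the left side becomes $(P^*)^{-1}H_Y P^{-1}\cdot PAP^{-1}=(P^*)^{-1}H_Y A P^{-1}$, and the right side becomes $(PAP^{-1})^*\cdot(P^*)^{-1}H_Y P^{-1}=(P^*)^{-1}A^*P^*\cdot(P^*)^{-1}H_Y P^{-1}=(P^*)^{-1}A^*H_Y P^{-1}$. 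Cancelling $(P^*)^{-1}$ on the left and $P^{-1}$ on the right (both invertible) yields $H_Y A=A^*H_Y$, which is exactly the statement that $A$ is $H_Y$-selfadjoint. Alternatively one can phrase this via the $H$-adjoint: since $(A)^{[*]}$ with respect to $H_Y$ equals $H_Y^{-1}A^*H_Y$, a short manipulation using $H_Y^{-1}=P^{-1}H_X^{-1}(P^*)^{-1}$ shows $A^{[*]}=A$.

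I do not expect any genuine obstacle here; the lemma is essentially a bookkeeping statement about how the two structural relations $A^m=B$ and $HA=A^*H$ transform under the change of basis $P$ that simultaneously implements a similarity on the matrix and a congruence on $H$. The only point requiring a little care is keeping the adjoints and inverses of $P$ and $P^*$ in the right places when substituting; I would present the congruence relation in the equivalent form $H_X=(P^{-1})^*H_Y P^{-1}$ at the outset to make the cancellations transparent. This lemma then justifies reducing the whole problem to the canonical pair $(J,H_B)$, as claimed in the surrounding text.
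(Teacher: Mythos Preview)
Your proof is correct and follows essentially the same approach as the paper. The only difference is cosmetic: the paper dispatches the $H_Y$-selfadjointness of $A$ in one line by noting that $(\tilde A,H_X)$ and $(A,H_Y)$ are unitarily similar and invoking the earlier remark that unitary similarity preserves $H$-selfadjointness, whereas you unpack that remark explicitly via the substitution $H_X=(P^*)^{-1}H_YP^{-1}$ and cancellation.
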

\begin{proof}
Suppose that the equalities in \eqref{S*HAS=H} hold. Let the matrix $\tilde{A}$ be an $H_X$-selfadjoint matrix such that $\tilde{A}^m=X$. Then by writing $(P^{-1}\tilde{A}P)^m=P^{-1}\tilde{A}^mP=Y$, we obtain an $m$th root $A:=P^{-1}\tilde{A}P$ of $Y$. It follows that $(\tilde{A},H_X)$ and $(A,H_Y)$ are unitarily similar and therefore the $m$th root $A$ is $H_Y$-selfadjoint.
\end{proof}

The above lemma can be used in the proofs for the existence of an $H$-selfadjoint $m$th root of a given $B$, where we only need to construct a matrix $\tilde{A}$ whose $m$th power has Jordan normal form equal to $B$ and check that there exists an invertible matrix $P$ such that equations \eqref{S*HAS=H} hold. 

From the literature (for example \cite[p.\ 461]{HornJohn}) we also know that when finding $m$th roots of a matrix $B$ in Jordan normal form, we may consider blocks with each eigenvalue separately. The situation is a bit more complicated in the case of $H$-selfadjoint $m$th roots, because of the restriction placed by the canonical form as given in Theorem~\ref{ThmCanonform}. We therefore need to group the blocks in $B$ in pairs of complex conjugate eigenvalues, except in the case where the eigenvalue is real, in which case it may appear on its own, see also \cite{BMRRR} for the case of finding $H$-selfadjoint square roots. We discuss the existence of $H$-selfadjoint $m$th roots for the cases where $B$ has only positive real eigenvalues, only nonreal eigenvalues, only eigenvalue zero, and only negative eigenvalues separately in the next few subsections.

Note that although functional calculus can be used for the case where $B$ has neither negative nor zero eigenvalues, we prefer to give a detailed proof using only linear algebra techniques.


\subsection{The case where $B$ has only positive eigenvalues}
\label{SecPosReal}
In this subsection we find conditions for the existence of an $H$-selfadjoint $m$th root of an $H$-selfadjoint matrix which has only positive real numbers as eigenvalues. We start with a discussion which leads to the main result of this subsection. 

Let $B=J_{n}\left(\lambda\right)$ and $H=\varepsilon Q_{n}$, where $\varepsilon=\pm 1$ and $\lambda$ is a positive real number. Let $\tilde{A}=J_{n}\left(\mu\right)$, where $\mu$ is the positive real $m$th root of $\lambda$. Then the Jordan normal form of $\tilde{A}^m$ is equal to $B$. Note that the matrix $\tilde{A}$ is $H_A$-selfadjoint where $H_A=\delta Q_{n}$, for $\delta=1$ or $\delta=-1$. Take $\delta=\varepsilon$. Next, we construct an invertible matrix $P$ such that the equations
\begin{equation}\label{eqP-1AmPx2}
P^{-1}\tilde{A}^mP=B\;\;\textup{and}\;\; P^*H_AP=H
\end{equation}
hold and then Lemma~\ref{S-1ASlemma} can be applied. Therefore we examine the structure of the matrix $P^*H_AP$ where the columns of $P$ form a Jordan basis for the matrix $\tilde{A}^m$. Recall that, see e.g.\ \cite[p.\ 594]{Meyer}, the matrix $P$ will be of the form 
\begin{equation}\label{eqPform}
P=\begin{bmatrix}
(\tilde{A}^m-\lambda I)^{n-1}y & \cdots & (\tilde{A}^m-\lambda I)y & y
\end{bmatrix},
\end{equation}
where $y=(y_1,\ldots,y_n)^T\in\textup{Ker}(\tilde{A}^m-\lambda I)^n$ but $y\notin\textup{Ker}(\tilde{A}^m-\lambda I)^{n-1}$. Note that in this case $\textup{Ker}(\tilde{A}^m-\lambda I)^n=\mathbb{C}^n$.

We write $p_i=(\tilde{A}^m-\lambda I)^{n-i}y$ and let the entries of the matrix $P^*H_AP$ be denoted by $\phi_{i,j}(y)$. Then, see for example the proof of Theorem~\ref{ThmCanonform} in \cite{GLR}, the matrix $P^*H_AP$ is an anti-lower triangular Hankel matrix and can be uniquely determined by using only the $n$ values in its last row.
Therefore $P^*H_AP=H$ if and only if 
\begin{equation}\label{eqns}
\phi_{j}(y)=\phi_{n,j}(y)=[p_j,p_n]=y^*H_A(\tilde{A}^m-\lambda I)^{n-j}y=\begin{cases}1 & \textup{if }j=1,\\
0 & \textup{if }j=2,\ldots,n.
\end{cases}
\end{equation}

Since the matrix $(\tilde{A}^m-\lambda I)$ is upper triangular with zeros on the diagonal, the matrix $(\tilde{A}^m-\lambda I)^{n-j}$ will have zeros in the first $n-j$ columns and in the last $n-j$ rows. This implies that the entry $\phi_j(y):=\phi_{n,j}(y)=y^*H_A(\tilde{A}^m-\lambda I)^{n-j}y$ for each $j=1,\ldots,n$, is an expression in the variables $y_n,\ldots,y_{n-j+1}$, that is,
\begin{eqnarray*}\label{eqphivariables}
\phi_1(y)&=& \phi_1(y_n)\nonumber \\\nonumber
\phi_2(y)&=& \phi_2(y_n,y_{n-1})\\
\phi_3(y)&=& \phi_3(y_n,y_{n-1},y_{n-2})\\
\vdots && \vdots \nonumber\\
\phi_n(y)&=& \phi_n(y_n,\ldots, y_1),\nonumber
\end{eqnarray*}
where each one is a sum of terms of the form $c\bar{y}_iy_j$, $c\in\mathbb{R}$, see Example~\ref{exAddonPos}.

Hence, using the first equation in \eqref{eqns} to find the $n$th entry in $y$ and the other equations in \eqref{eqns} to write each of the other entries in $y$ in terms of the $n$th entry, we construct the vector $y$ and consequently find a matrix $P$ using \eqref{eqPform} such that equations \eqref{eqP-1AmPx2} hold.  It now follows from Lemma~\ref{S-1ASlemma} that the matrix $A:=P^{-1}\tilde{A}P$ is an $H$-selfadjoint $m$th root of $B$. 

The procedure is illustrated in the following example.
\begin{example}\label{exAddonPos}
Let $B=J_3(\lambda)$ where $\lambda$ is a positive real number and let $\tilde{A}=J_3(\mu)$ where $\mu$ is the positive real $m$th root of $\lambda$.  The matrix $B$ is $H$-selfadjoint where $H=Q_3$ and the matrix $\tilde{A}$ is $H_A$-selfadjoint where $H_A=H=Q_3$. Then the Jordan form of 
\begin{equation*}
\tilde{A}^m=\begin{bmatrix}
\lambda & m\mu^{m-1} & \frac{1}{2}m(m-1)\mu^{m-2} \\
0 & \lambda & m\mu^{m-1} \\
0 & 0 & \lambda
\end{bmatrix}
\end{equation*}
is $B$ and therefore we know that the invertible matrix $P$ for which the  equality $P^{-1}\tilde{A}^mP=B$ is true will be of the form
\begin{eqnarray*}
P&=&\begin{bmatrix}
(\tilde{A}^m-\lambda I)^2y & (\tilde{A}^m-\lambda I)y & y
\end{bmatrix}\\&=&\begin{bmatrix}
(m\mu^{m-1})^2y_3 & m\mu^{m-1}y_2+\frac{1}{2}m(m-1)\mu^{m-2}y_3 & y_1 \\
0 & m\mu^{m-1}y_3 & y_2 \\
0 & 0 & y_3
\end{bmatrix},
\end{eqnarray*} where $y=(y_1,y_2,y_3)^T\in\textup{Ker }(\tilde{A}^m-\lambda I)^3=\mathbb{C}^3$ but $y\notin\textup{Ker }(\tilde{A}^m-\lambda I)^2$. Then by equating the entries in the third row, $P^*HP=H$ holds if and only if the following equations hold:
\begin{eqnarray*}
1=&\phi_1(y_3)&=(m\mu^{m-1})^2\bar{y}_3y_3, \\
0=&\phi_2(y_3,y_2)&=m\mu^{m-1}\bar{y}_2y_3+\frac{1}{2}m(m-1)\mu^{m-2}\bar{y}_3y_3+m\mu^{m-1}\bar{y}_3y_2, \\
0=&\phi_3(y_3,y_2,y_1)&=\bar{y}_1y_3+\bar{y}_2y_2+\bar{y}_3y_1.
\end{eqnarray*}
Assume that $y$ is real, then we can solve the first equation for $y_3$, choose the positive value and substitute into the second equation to find $y_2$, and lastly solve the third equation for $y_1$. Therefore, one solution to these equations is as follows:
$$
y_1=\frac{-(m-1)^2}{32m\mu^{m+1}};\quad y_2=\frac{-(m-1)}{4m\mu^m};\quad y_3=\frac{1}{m\mu^{m-1}}.\qed
$$
\end{example}

In the case where $B$ consists of more than one block, the construction may be applied to each block separately. We have thus proved the following theorem. 

\begin{theorem}
Let $B$ be an $H$-selfadjoint matrix with a spectrum consisting only of positive real numbers.
Then there exists an $H$-selfadjoint matrix $A$ such that $A^m=B$.
\end{theorem}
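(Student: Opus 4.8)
The plan is to carry out the reduction already sketched in the paragraphs preceding the statement. Since every eigenvalue of $B$ is a positive real number, the canonical form of $(B,H)$ furnished by Theorem~\ref{ThmCanonform} contains no complex-conjugate pairs of blocks: $(B,H)$ is unitarily similar to a pair $(J,H_B)$ with $J=\bigoplus_{i=1}^r J_{k_i}(\lambda_i)$, each $\lambda_i>0$, and $H_B=\bigoplus_{i=1}^r\varepsilon_iQ_{k_i}$, each $\varepsilon_i=\pm1$. Because unitary similarity is an equivalence relation, Lemma~\ref{S-1ASlemma} reduces the task to producing an $H_B$-selfadjoint $m$th root of $J$; and since $m$th roots may be handled blockwise, it suffices to produce, for each $i$, an $(\varepsilon_iQ_{k_i})$-selfadjoint $m$th root of $J_{k_i}(\lambda_i)$ and then take the direct sum (assembling the intertwining matrices and the $H_A$'s blockwise as well, so that the assembled $H_A$ equals $H_B$).

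Fix one such block $(J_n(\lambda),\varepsilon Q_n)$ with $\lambda>0$, set $\mu:=\lambda^{1/m}>0$, and put $\tilde A:=J_n(\mu)$ and $H_A:=\varepsilon Q_n$, so that $\tilde A$ is $H_A$-selfadjoint. The matrix $\tilde A^m-\lambda I$ is strictly upper triangular with first superdiagonal entry $m\mu^{m-1}\neq0$, hence nilpotent of index exactly $n$, so $\tilde A^m$ is similar to $J_n(\lambda)$; consequently, for any $y=(y_1,\dots,y_n)^T$ with $y_n\neq0$ (equivalently $y\notin\textup{Ker}(\tilde A^m-\lambda I)^{n-1}$) the matrix $P$ formed from $y$ as in \eqref{eqPform} is invertible and satisfies $P^{-1}\tilde A^mP=J_n(\lambda)$. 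It then remains to choose $y$ so that, in addition, $P^*H_AP=\varepsilon Q_n$. As explained around \eqref{eqns}, $P^*H_AP$ is an anti-lower-triangular Hankel matrix, so this is equivalent to the $n$ scalar conditions stating that its last row coincides with that of $\varepsilon Q_n$, i.e.\ $\phi_j(y):=y^*H_A(\tilde A^m-\lambda I)^{n-j}y$ equals $\varepsilon$ for $j=1$ and $0$ for $j=2,\dots,n$, where $\phi_j$ involves only $y_n,\dots,y_{n-j+1}$.

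The heart of the argument — and the only step that is not bookkeeping — is that this system is always solvable by a simple recursion. Searching among real vectors $y$, the fact that the nonzero entries of $(\tilde A^m-\lambda I)^{n-j}$ lie only on superdiagonals $n-j,\dots,n-1$ shows that $\phi_1(y)=\varepsilon\,(m\mu^{m-1})^{n-1}y_n^2$, which is solved by $y_n=(m\mu^{m-1})^{-(n-1)/2}>0$; and that for $j\ge2$ the new variable $y_{n-j+1}$ enters $\phi_j$ only through the cross term $2\varepsilon\,(m\mu^{m-1})^{n-j}\,y_n\,y_{n-j+1}$, the remaining part of $\phi_j$ depending solely on the already-determined $y_n,\dots,y_{n-j+2}$. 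Since $m\mu^{m-1}\neq0$ and $y_n\neq0$, the coefficient of the new variable never vanishes, so $\phi_j(y)=0$ determines $y_{n-j+1}$; letting $j$ run from $1$ to $n$ produces the required $y$ (with $y_n\neq0$). With $P$ in hand, \eqref{eqP-1AmPx2} holds, so Lemma~\ref{S-1ASlemma} gives that $P^{-1}\tilde AP$ is an $H_A$-selfadjoint $m$th root of $J_n(\lambda)$; reassembling the blocks and transporting back along the similarity of Theorem~\ref{ThmCanonform} (again by Lemma~\ref{S-1ASlemma}) yields an $H$-selfadjoint $A$ with $A^m=B$. The expected main obstacle is precisely verifying that the above recursion does not break down, i.e.\ the non-vanishing of that cross-term coefficient; everything else is the canonical-form reduction and the Hankel bookkeeping set up before the statement, and a concrete instance of the recursion is carried out in Example~\ref{exAddonPos}.
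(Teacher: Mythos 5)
Your proposal is correct and follows essentially the same route as the paper: reduce to a single block of the canonical form, take $\tilde A=J_n(\mu)$ with $\mu=\lambda^{1/m}>0$ and $H_A=\varepsilon Q_n$, build $P$ from a Jordan-chain vector $y$ as in \eqref{eqPform}, and solve the triangular system \eqref{eqns} recursively for $y_n,\dots,y_1$, finishing with Lemma~\ref{S-1ASlemma}. Your explicit verification that the coefficient $2\varepsilon(m\mu^{m-1})^{n-j}y_n$ of the new variable never vanishes is a welcome sharpening of the step the paper leaves implicit.
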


\subsection{The case where $B$ has only nonreal eigenvalues}
\label{SecNonreal}
In this subsection we give a proof for the existence of an $H$-selfadjoint $m$th root in the case where $B$ has only nonreal eigenvalues.
\begin{theorem}
Let $B$ be an $H$-selfadjoint matrix with a spectrum consisting only of nonreal numbers. Then there exists an $H$-selfadjoint matrix $A$ such that $A^m=B$.
\end{theorem}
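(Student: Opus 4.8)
The plan is to reduce the nonreal case to the positive-eigenvalue case handled in the previous subsection, using the structure forced by the canonical form. By Lemma~\ref{S-1ASlemma} it suffices to treat $(B,H)$ in canonical form, so we may assume $B$ is a direct sum of paired blocks $J_k(\lambda)\oplus J_k(\overline\lambda)$ with $\mathrm{Im}\,\lambda>0$ and the corresponding piece of $H$ equal to $Q_{2k}$. Since the blocks decouple, it is enough to produce an $H$-selfadjoint $m$th root for a single such pair $\bigl(J_k(\lambda)\oplus J_k(\overline\lambda),\,Q_{2k}\bigr)$.

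First I would pick a single $m$th root $\mu$ of $\lambda$ (any branch, say the principal one, so $\mathrm{Im}\,\mu$ need not be positive but $\mu^m=\lambda$ and $\mu\neq 0$ since $\lambda\neq 0$; note $\overline\mu$ is then an $m$th root of $\overline\lambda$). Set $\tilde A = J_k(\mu)\oplus J_k(\overline\mu)$. Because $J_k(\mu)$ and $J_k(\overline\mu)$ have complex-conjugate eigenvalues and equal block sizes, the pair $(\tilde A, Q_{2k})$ is exactly the canonical form of an $H_A$-selfadjoint matrix with $H_A=Q_{2k}$, so $\tilde A$ is $Q_{2k}$-selfadjoint. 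Now $\tilde A^m = J_k(\mu)^m \oplus J_k(\overline\mu)^m$, and since $\mu\neq 0$ the Jordan form of $J_k(\mu)^m$ is $J_k(\mu^m)=J_k(\lambda)$ (an $m$th power of a single nonsingular Jordan block is a single Jordan block of the same size), and likewise $J_k(\overline\mu)^m$ has Jordan form $J_k(\overline\lambda)$. Hence the Jordan form of $\tilde A^m$ equals $B$.

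The remaining point is to verify that the change of basis $P$ carrying $\tilde A^m$ to its Jordan form $B$ can be chosen so that simultaneously $P^*H_AP = H = Q_{2k}$; then Lemma~\ref{S-1ASlemma} gives that $A:=P^{-1}\tilde A P$ is the desired $H$-selfadjoint $m$th root. This is the analogue of the computation in the positive-eigenvalue subsection, and I would argue it the same way: within the invariant subspace $\mathrm{Ker}(\tilde A^m-\lambda I)^k$ (which is the first copy of $\mathbb C^k$) one chooses a generating vector $y$ for the Jordan chain and computes the Hankel-type matrix $P^*H_AP$ block-by-block. The key structural fact making this work is that for an $H_A$-selfadjoint $\tilde A$ in canonical form with nonreal spectrum, the pairing $[\cdot,\cdot]_{H_A}$ puts the $\lambda$-root subspace in duality with the $\overline\lambda$-root subspace (each is $H_A$-neutral on its own, up to the conjugate block), so there is no sign characteristic to match — any nonreal eigenvalue pair is ``sign-free.'' Concretely, $P = P_1\oplus P_2$ where $P_1$ is a Jordan basis for $J_k(\mu)^m$ built from a chain vector $y$ and $P_2$ is the analogous (conjugate) basis for $J_k(\overline\mu)^m$; the off-diagonal $H_A$-pairing between the two halves is what must equal the anti-diagonal identity, and this reduces to solving the same triangular system of equations in the entries of $y$ as in \eqref{eqns}, which is always solvable since the leading coefficient $(m\mu^{m-1})^{k-1}$ is nonzero.

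The main obstacle is purely bookkeeping: organizing the two conjugate Jordan chains so that the cross-pairing $P_1^*\,(Q_{2k}\text{-block})\,P_2$ comes out to $Q_k$ rather than to an arbitrary invertible anti-triangular matrix, and checking that no extra normalization (a sign, or a phase) obstructs this. I expect no genuine difficulty here because, unlike the real-eigenvalue case, there is no $\pm1$ invariant to get right — the result should follow by the same explicit construction as in Subsection~\ref{SecPosReal} applied to the $\lambda$-block, with the $\overline\lambda$-block and its basis obtained by conjugation, and I would present it with a short computation or a small worked example mirroring Example~\ref{exAddonPos}.
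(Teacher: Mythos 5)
Your proposal is correct and follows essentially the same route as the paper: reduce to a single pair $\bigl(J_n(\lambda)\oplus J_n(\overline{\lambda}),Q_{2n}\bigr)$, take $\tilde A=J_n(\mu)\oplus J_n(\overline{\mu})$ for any $m$th root $\mu$ of $\lambda$, and build $P=P_1\oplus P_2$ with the second chain obtained by conjugation (the paper takes $z=\bar y$, so $P_2=\overline{P_1}$), reducing $P^*Q_{2n}P=Q_{2n}$ to the same solvable triangular system $\phi_j(y)=y^TQ_n\bigl((J_n(\mu))^m-\lambda I\bigr)^{n-j}y=\delta_{j1}$ as in the positive-eigenvalue case. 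Your observation that the cross-pairing between the $\lambda$- and $\overline{\lambda}$-root subspaces removes any sign-characteristic obstruction is exactly the point the paper exploits.
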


\begin{proof}
Let $B$ be a $2n\times 2n$ $H$-selfadjoint matrix with nonreal eigenvalues. Assume that the pair $(B,H)$ is in canonical form and that $B$ has only one number and its complex conjugate as eigenvalues, each with a geometric multiplicity of one. Thus, with $\lambda$ being a nonreal number,
\[B=J_{n}\big(\lambda\big)\oplus J_{n}\left(\bar{\lambda}\right)\;\;\textup{and}\;\; H=Q_{2n}.
\]
Let $\mu$ be any $m$th root of $\lambda$ and
\[\tilde{A}=J_{n}\left(\mu\right)\oplus J_{n}\left(\bar{\mu}\right),\]
 then the Jordan normal form of $\tilde{A}^m$ is equal to $B$. Note that the matrix $\tilde{A}$ is $H_A$-selfadjoint where $H_A=Q_{2n}$. We once again construct a $2n\times 2n$ invertible matrix $P$ such that $P^{-1}\tilde{A}^mP=B\;\;\textup{and}\;\; P^*H_AP=H$ hold.
For the first equality to hold, the columns of $P$ have to form a Jordan basis for the matrix $\tilde{A}^m$, and therefore $P=P_1\oplus P_2$ where
\begin{equation*}
P_1=\begin{bmatrix}
\big((J_n(\mu))^m-\lambda I\big)^{n-1}y & \cdots & y
\end{bmatrix}\end{equation*}
and
\begin{equation*}
P_2=\begin{bmatrix}
\left((J_n(\bar{\mu}))^m-\bar{\lambda} I\right)^{n-1}z & \cdots & z
\end{bmatrix},
\end{equation*}
where $y\in\textup{Ker}\left((J_n(\mu))^m-\lambda I\right)^n=\mathbb{C}^n$ but $y\notin\textup{Ker}\left((J_n(\mu))^m-\lambda I\right)^{n-1}$, and $z\in\textup{Ker}\left((J_n(\bar{\mu}))^m-\bar{\lambda} I\right)^n=\mathbb{C}^n$ but $z\notin\textup{Ker}\left((J_n(\bar{\mu}))^m-\bar{\lambda} I\right)^{n-1}$. Take $z=\bar{y}$, i.e.\ $P_2=\overline{P_1}$. Then by a simple calculation one finds that $P^*H_AP=H$ if and only if $P_1^TQ_nP_1=Q_n$, and by following a similar process as in Section~\ref{SecPosReal} one can see that this is true if and only if 
\begin{equation*}
\phi_{j}(y)=\phi_{n,j}(y)=y^TQ_n((J_n(\mu))^m-\lambda I)^{n-j}y=\begin{cases}1 & \textup{if }j=1,\\
0 & \textup{if }j=2,\ldots,n.
\end{cases}
\end{equation*}
Note that, similarly to case in Section~\ref{SecPosReal}, each $\phi_j(y)$ is an expression in the variables $y_n,\ldots,y_{n-j+1}$, and a solution to these equations can be found by solving the first equation for $y_n$ and substituting back into the other equations. Therefore, there exists a solution to $P^*H_AP=H$ which also satisfies $P^{-1}\tilde{A}^mP=B$, and hence by Lemma~\ref{S-1ASlemma} the matrix  $A:=P^{-1}\tilde{A}P$ is an $H$-selfadjoint $m$th root of $B$. In the case where $B$ consists of more than one pair of blocks, the construction can be applied to each pair of blocks separately.
\end{proof}

\subsection{The case where $B$ has only eigenvalue zero}
\label{SecZero}
In this section we consider the case where $B$ has only zero in its spectrum, so that $B$ is nilpotent.
Obviously the $m$th roots of $B$ will then be nilpotent as well. 

The main theorem concerning $H$-selfadjoint $m$th roots in this case is given. The first property is necessary for the existence of an $m$th root in general and this is well-known (see for example \cite{HornJohn}).
\begin{theorem}\label{ThmZeroEig}
Let $B$ be a nilpotent $H$-selfadjoint matrix. Then there exists an $H$-selfadjoint matrix $A$ such that $A^m=B$ if and only if the canonical form of $(B,H)$, given by $(J,H_B)$, has the following properties:
\begin{enumerate}
\item There exists a reordering, $n_1',n'_2,n_3',\ldots,n_{pm}',0,\ldots$, of the Segre characteristic of $B$ such that for all $k$ the $m$-tuple $(n_{m(k-1)+1}',\ldots,n_{mk}')$ is descending and the difference between $n_{m(k-1)+1}'$ and $n_{mk}'$ is at most one. 
\item For some reordering satisfying the first property, the number of positive blocks in $H_B$ of size $\nu$ is equal to $\sum_{k=1}^p \pi_{\nu,k}$ where
\begin{equation}\label{+blocksincases}
\pi_{\nu,k}=\begin{cases}
\frac{1}{2}\left(\big|\mathcal{B}_\nu^{(k)}\big|\right) & \textit{if }\big|{\mathcal{B}_\nu^{(k)}}\big|\textit{ is even}, \\
\frac{1}{2}\left(\big|{\mathcal{B}_\nu^{(k)}}\big|+\eta_k\right) & \textit{if }\big|{\mathcal{B}_\nu^{(k)}}\big|\textit{ is odd},
\end{cases}
\end{equation}
with $\eta_k$ either equal to $1$ or $-1$.
\end{enumerate}
\end{theorem}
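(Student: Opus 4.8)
The plan is to prove both directions by reducing, as in the positive-eigenvalue case, the construction of an $H$-selfadjoint $m$th root of the canonical pair $(J,H_B)$ to a combinatorial bookkeeping problem on Jordan block sizes and signs. The central observation is the classical fact (see \cite{HornJohn,Gantm}) that the $m$th power of a single nilpotent Jordan block $J_{\ell}(0)$ has a Jordan structure consisting of $m$ blocks whose sizes are $\lceil \ell/m\rceil$ repeated $\ell \bmod m$ times and $\lfloor \ell/m\rfloor$ repeated $m-(\ell\bmod m)$ times; these $m$ sizes form a descending $m$-tuple in which the largest and smallest differ by at most one. Conversely, given a descending $m$-tuple $(n'_{m(k-1)+1},\ldots,n'_{mk})$ with first and last entries differing by at most one, there is a unique $\ell$ (namely $\ell = n'_{m(k-1)+1}+\cdots+n'_{mk}$) such that $J_\ell(0)^m$ has exactly this Jordan structure. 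This explains why property (1) is exactly the condition for the \emph{similarity} class of $B$ to admit an $m$th root, and it tells us that any $H$-selfadjoint $m$th root $A$ must, block by block, consist of Jordan blocks $J_{\ell_k}(0)$ whose $m$th powers account for the $p$ $m$-tuples in a valid reordering.

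For the necessity direction I would argue as follows. Suppose $A$ is an $H$-selfadjoint $m$th root of $B$. Then $A$ is nilpotent and $H$-selfadjoint, so $(A,H)$ has its own canonical form: $A$ is unitarily similar to a direct sum of blocks $J_{\ell_k}(0)$ each with an attached sign $\epsilon_k$ and sip matrix $\epsilon_k Q_{\ell_k}$. Taking $m$th powers commutes with the similarity, so the Jordan structure of $B$ is the union over $k$ of the $m$-tuples coming from $J_{\ell_k}(0)^m$; reading off these $m$-tuples gives the required reordering, establishing (1). For (2), the key sub-step is to compute the sign characteristic of $(J_{\ell}(0)^m,\,\epsilon Q_{\ell})$ — that is, to determine, for each $\nu$, how many of the $|\mathcal{B}_\nu^{(k)}|$ blocks of size $\nu$ produced by $J_{\ell_k}(0)^m$ receive sign $+1$ in the canonical form of the pair. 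One expects that when $|\mathcal{B}_\nu^{(k)}|$ is even the blocks pair off into sign-$+$/sign-$-$ couples (so exactly half are positive), and when it is odd there is one unpaired block whose sign equals $\eta_k$, a quantity determined by $\epsilon_k$, $\ell_k$, $m$ and $\nu$ (most likely $\eta_k = \epsilon_k\cdot(-1)^{\text{something}}$). Summing these counts over $k$ gives the formula for the number of positive blocks of size $\nu$ in $H_B$, which is exactly property (2).

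For the sufficiency direction I would do the explicit construction. Given a valid reordering, for each $k=1,\ldots,p$ set $\ell_k = n'_{m(k-1)+1}+\cdots+n'_{mk}$ and $\tilde A := \bigoplus_{k=1}^p J_{\ell_k}(0)$, which is $H_A$-selfadjoint for $H_A := \bigoplus_{k=1}^p \epsilon_k Q_{\ell_k}$ with signs $\epsilon_k\in\{\pm1\}$ chosen so that the induced sign characteristic of $(\tilde A^m, H_A)$ matches that prescribed by property (2) — the computation in the necessity direction shows such a choice of the $\epsilon_k$ exists precisely because of the stated formula (the parameters $\eta_k$ in the theorem are exactly the free signs that get absorbed into the $\epsilon_k$). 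Then $\tilde A^m$ has Jordan normal form $J$, and $(\tilde A^m, H_A)$ has the same canonical form as $(B,H) = $ (up to unitary similarity) $(J,H_B)$, so the two pairs are unitarily similar; there is an invertible $P$ with $P^{-1}\tilde A^m P = B$ and $P^* H_A P = H$. By Lemma~\ref{S-1ASlemma} the matrix $A := P^{-1}\tilde A P$ is an $H$-selfadjoint $m$th root of $B$. One should also supply the concrete vector/Hankel-matrix recipe for $P$ analogous to \eqref{eqPform}–\eqref{eqns}, or else invoke Theorem~\ref{ThmCanonform} directly for the existence of $P$.

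The main obstacle I anticipate is the sign-characteristic computation for $(J_\ell(0)^m, \epsilon Q_\ell)$: one must track exactly how the sip structure $\epsilon Q_\ell$ restricts to (and distributes across) the Jordan chains of $J_\ell(0)^m$, and show the pairing/residual-sign behavior described above, including identifying $\eta_k$ correctly in terms of $\epsilon_k$, $m$, $\ell_k$ and $\nu$. This is essentially a careful but elementary linear-algebra calculation — choosing a good Jordan basis for $J_\ell(0)^m$ and evaluating $[\cdot,\cdot]$ on it — but getting the parity bookkeeping exactly right (and handling the two cases $|\mathcal{B}_\nu^{(k)}|$ even versus odd uniformly) is where the real work lies. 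A secondary subtlety is checking that property (1)'s ``difference at most one'' condition is not just sufficient but necessary for a \emph{single} Jordan block to be the relevant piece — i.e. that an $m$th root cannot mix contributions from different $J_{\ell_k}(0)$ into a single $m$-tuple in an essential way — which follows from the uniqueness of the Jordan structure of $J_\ell(0)^m$ but deserves an explicit remark.
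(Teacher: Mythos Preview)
Your proposal is correct and follows essentially the same route as the paper: reduce to the canonical form of $(A,H)$, use the Jordan structure of $J_\ell(0)^m$ to obtain property~(1), analyze the sign characteristic of $(J_\ell(0)^m,\epsilon Q_\ell)$ via the pairing of Jordan chains to obtain property~(2), and for sufficiency build $\tilde A=\bigoplus_k J_{\ell_k}(0)$ with $\ell_k$ the $m$-tuple sums and invoke Lemma~\ref{S-1ASlemma}. The one point where you hedge unnecessarily is the identification of $\eta_k$: the paper's Jordan-chain computation (illustrated in Example~\ref{exJorChainsChange0}) shows that $\eta_k$ is simply $\epsilon_k$ itself, with no extra $(-1)^{\text{something}}$ factor---the chains of a given length $\nu$ from $J_{\ell_k}(0)^m$ pair off into opposite-sign couples, and any leftover unpaired chain (when $|\mathcal B_\nu^{(k)}|$ is odd) inherits the sign $\epsilon_k$ directly.
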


The proof will follow after some results and two examples.

\begin{lemma} \label{LemA^m} 
Let the matrix $A$ be equal to $J_n(0)$. Then $A^m$ has Jordan normal form 
\begin{equation}\label{eqJordA^m}
\bigoplus_{i=1}^rJ_{a+1}(0)\oplus\bigoplus_{i=1}^{m-r}J_a(0),
\end{equation}
where $n=am+r$, for $a,r\in\mathbb{Z}$, $0<r\leq m$.
\end{lemma}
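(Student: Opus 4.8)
The plan is to analyze the action of the nilpotent Jordan block $A=J_n(0)$ on the standard basis $e_1,\ldots,e_n$ and read off the Jordan structure of $A^m$ directly from the ranks of its powers. Recall that $J_n(0)$ acts as the shift $e_1\mapsto 0$, $e_j\mapsto e_{j-1}$ for $j\geq 2$. The key observation is that $A^m$ acts as the $m$-step shift: $A^m e_j = e_{j-m}$ when $j>m$, and $A^m e_j = 0$ when $j\leq m$. Consequently $\operatorname{rank}(A^m)^\ell = \max(n-\ell m,0)$ for every $\ell\geq 0$, since $(A^m)^\ell = A^{\ell m}$ is the $(\ell m)$-step shift.

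Next I would recover the Segre characteristic of $A^m$ from these ranks. The number of Jordan blocks of $A^m$ of size at least $\ell$ equals $\operatorname{rank}(A^m)^{\ell-1}-\operatorname{rank}(A^m)^{\ell}$. Writing $n=am+r$ with $0<r\leq m$, one computes that for $\ell=1,\ldots,a$ this difference equals $m$, for $\ell=a+1$ it equals $r$, and for $\ell>a+1$ it equals $0$. Translating block-size-counts into a list of block sizes: there are exactly $m$ blocks of size $\geq 1$ through size $\geq a$, and then exactly $r$ of those continue to size $a+1$. Hence $r$ blocks have size exactly $a+1$ and the remaining $m-r$ blocks have size exactly $a$, which is precisely \eqref{eqJordA^m}. (The edge case $r=m$, i.e.\ $m\mid n$, gives $m$ blocks all of size $a+1$, consistent with the formula since then $m-r=0$.)

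Alternatively — and this is perhaps the cleaner write-up — one can exhibit an explicit invariant subspace decomposition: for $i=1,\ldots,m$ let $\mathcal{V}_i=\operatorname{span}\{e_i,e_{i+m},e_{i+2m},\ldots\}$, the span of basis vectors whose index is congruent to $i$ modulo $m$. Each $\mathcal{V}_i$ is $A^m$-invariant, $\mathbb{C}^n=\mathcal{V}_1\oplus\cdots\oplus\mathcal{V}_m$, and $A^m$ restricted to $\mathcal{V}_i$ is, in the obvious ordering of its basis, a single nilpotent Jordan block of size $\dim\mathcal{V}_i$. It remains only to count dimensions: with $n=am+r$, the indices $i=1,\ldots,r$ give $\dim\mathcal{V}_i=a+1$, and the indices $i=r+1,\ldots,m$ give $\dim\mathcal{V}_i=a$, yielding exactly \eqref{eqJordA^m}.

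There is no serious obstacle here; the only points requiring care are the bookkeeping in the case $r=m$ (so that $0<r\le m$ rather than $0\le r<m$ is the right normalization, ensuring $a\geq 1$ whenever $n\geq m$ and giving the correct count of full-size blocks) and the verification that $A^m|_{\mathcal{V}_i}$ really is a single Jordan block rather than merely nilpotent — which follows because its $(\dim\mathcal{V}_i-1)$st power is nonzero on the generator $e_i$ (respectively $e_{i+\text{(something)}}$, the highest-index basis vector in $\mathcal{V}_i$). I would present the subspace-decomposition argument as the main proof and relegate the rank computation to a remark, since the former makes the block structure manifest.
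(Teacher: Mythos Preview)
Your proposal is correct, and your main argument (the subspace decomposition into $\mathcal{V}_i=\operatorname{span}\{e_j:\ j\equiv i\pmod m\}$) is essentially identical to the paper's proof, which explicitly lists the Jordan chains $C_j=\{e_i:\ i\equiv j\pmod m\}$ and counts their lengths via the division algorithm. Your alternative rank computation is a nice addition not present in the paper, but the core approach coincides.
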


Note that if $m\geq n$, then $a=0$ and $r=n$, i.e.\ \eqref{eqJordA^m} is equal to $\bigoplus_{i=1}^nJ_1(0)$, which is the $n\times n$ zero matrix.
\begin{proof}
Let $A=J_n(0)$. If $m\geq n$, then $A^m=(J_n(0))^m=0=\bigoplus_{i=1}^n J_1(0)$. Let $m<n$, then by raising $A$ to the $m$th power, we have
\[A^m=\begin{bmatrix}
0&\cdots&0&1&&& \\
&\ddots&&\ddots&\ddots&&\\
&&\ddots&&\ddots&&1 \\
&&&\ddots&&&0 \\
&&&&\ddots&&\vdots \\
&&&&&&0
\end{bmatrix}, \]
where the one in the first row is in the $(m+1)$th column. It can easily be seen that the Jordan chains of $A^m$ are
\begin{eqnarray}\label{eqJorChainsCj}
C_1&=&\{e_i \mid i\equiv 1 \,(\textup{mod }m);\; i=1,\ldots,n\},\nonumber\\
C_2&=&\{e_i \mid i\equiv 2 \,(\textup{mod }m);\; i=1,\ldots,n\},\nonumber\\
\vdots\;&&\quad\vdots \\
C_{m-1}&=&\{e_i \mid i\equiv m-1 \,(\textup{mod }m);\; i=1,\ldots,n\},\nonumber\\
C_m&=&\{e_i \mid i\equiv 0 \,(\textup{mod }m);\; i=1,\ldots,n\}.\nonumber
\end{eqnarray}
Use the division algorithm to write $n=am+r$ where $a,r\in\mathbb{Z}$, $0<r\leq m$. Then the number of elements in each set $C_j$ is
\begin{equation}\label{Cj_sizes}\abs{C_j}=\begin{cases}
a+1 & \textup{for}\;\;1\leq j\leq r, \\
a   & \textup{for}\;\;r+1\leq j\leq m.
\end{cases}
\end{equation}
Let $S$ be the $n\times n$ invertible matrix with columns consisting of the vectors in the Jordan chains $C_1,\ldots,C_m$. Then since the lengths of Jordan chains coincide with the sizes of the corresponding Jordan blocks and by using \eqref{Cj_sizes} we have the Jordan normal form of $A^m$: 
\begin{equation*}
S^{-1}A^mS=\bigoplus_{j=1}^mJ_{\abs{C_j}}(0)=\bigoplus_{i=1}^rJ_{a+1}(0)\oplus\bigoplus_{i=1}^{m-r}J_a(0).\qedhere
\end{equation*}
\end{proof}

An interesting corollary that we obtain from this result, is the following.

\begin{corollary}
If the number of Jordan blocks at zero of a matrix $B$ is not divisible by $m$, and there are no $J_1(0)$ blocks, i.e., no entry in the Segre characteristic of $B$ corresponding to the zero eigenvalue is equal to one, then $B$ does not have an $m$th root.
\end{corollary}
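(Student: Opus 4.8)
The plan is to argue by contradiction, using Lemma~\ref{LemA^m} together with the fact that raising a matrix to the $m$th power can be carried out blockwise on its Jordan form. Suppose $A^m=B$ and write $A$ in Jordan form, $A\sim\bigoplus_{s=1}^{N}J_{n_s}(\lambda_s)$. Then $B=A^m\sim\bigoplus_{s=1}^{N}\bigl(J_{n_s}(\lambda_s)\bigr)^m$, and since the single eigenvalue of $\bigl(J_{n_s}(\lambda_s)\bigr)^m$ is $\lambda_s^m$, which vanishes precisely when $\lambda_s=0$, the Jordan blocks of $B$ at the eigenvalue $0$ are exactly those contributed by the summands with $\lambda_s=0$; denote their indices by $s_1,\dots,s_q$, so that $q$ is the number of Jordan blocks of $A$ at $0$.

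Next I would count the Jordan blocks of $B$ at $0$ one summand at a time. Fix $j$ and write $n_{s_j}=am+r$ with $0<r\leq m$ as in Lemma~\ref{LemA^m}. If $a\geq 1$ (equivalently $n_{s_j}>m$), the lemma gives that $\bigl(J_{n_{s_j}}(0)\bigr)^m$ has exactly $r+(m-r)=m$ Jordan blocks, of sizes $a$ and $a+1$, all positive since $a\geq 1$. If instead $a=0$ (equivalently $n_{s_j}\leq m$), then $\bigl(J_{n_{s_j}}(0)\bigr)^m$ is the $n_{s_j}\times n_{s_j}$ zero matrix and contributes $n_{s_j}\geq 1$ blocks $J_1(0)$ to $B$. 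The hypothesis that no entry of the Segre characteristic of $B$ at $0$ equals $1$ thus rules out the second possibility: every $n_{s_j}$ must exceed $m$, so every one of the $q$ summands contributes exactly $m$ Jordan blocks. Since the Jordan blocks of a direct sum form the multiset union of the Jordan blocks of the summands, the total number of Jordan blocks of $B$ at $0$ equals $qm$, divisible by $m$ --- contradicting the assumption that this number is not divisible by $m$. Hence $B$ has no $m$th root.

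The whole argument is a short piece of bookkeeping once Lemma~\ref{LemA^m} is available; the only step I would take care over is the first paragraph, namely that the Jordan blocks of $B$ at $0$ are produced solely and independently by the $\lambda_s=0$ blocks of $A$, with no interaction between distinct summands. This is immediate from the fact that the Jordan form of a direct sum is the direct sum of the Jordan forms, but since it is the hinge of the argument I would state it explicitly rather than leave it implicit.
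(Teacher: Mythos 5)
Your argument is correct and is exactly the intended one: the paper states this corollary as an immediate consequence of Lemma~\ref{LemA^m} without writing out the bookkeeping, and your proof supplies precisely that bookkeeping (each nilpotent Jordan block of a putative root $A$ of size $n>m$ contributes exactly $m$ blocks to $B$ at $0$, while a block of size $n\leq m$ would force $J_1(0)$ blocks in $B$). Your explicit note that the blocks of $B$ at $0$ arise independently from the nilpotent summands of $A$ is a sensible precaution and matches the paper's implicit use of the same fact.
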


The following result shows the relation between the canonical forms as will be illustrated in the examples.

\begin{lemma}
The pair $(A,H)$ has canonical form $(J_n(0),\eta Q_n)$, $\eta=\pm 1$, if and only if $(A^m,H)$ has canonical form 
\begin{equation*}
\left(\bigoplus_{i=1}^r J_{a+1}(0)\oplus\bigoplus_{i=1}^{m-r} J_a(0),\,\bigoplus_{i=1}^r \varepsilon_i Q_{a+1}\oplus\bigoplus_{i=1}^{m-r} \varepsilon_{r+i}Q_a \right),
\end{equation*}
where the signs are as follows: If $r$ (resp.\ $m-r$) is even, the number of signs $\varepsilon_i$, where $i=1,\ldots,r$ (resp.\ $i=r+1,\ldots,m$), which are equal to $\eta$ is $\frac{r}{2}$ (resp.\ $\frac{m-r}{2}$). If $r$ (resp.\ $m-r$) is odd, the number of signs $\varepsilon_i$, where $i=1,\ldots,r$ (resp.\ $i=r+1,\ldots,m$), which are equal to $\eta$ is $\frac{r+1}{2}$ (resp.\ $\frac{m-r+1}{2}$). In both cases the rest of the signs are equal to $-\eta$.
\end{lemma}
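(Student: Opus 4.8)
The plan is to reduce to the concrete pair $(J_n(0),\eta Q_n)$ and to read off the sign characteristic of its $m$th power directly from the Jordan chains already exhibited in the proof of Lemma~\ref{LemA^m}. Note first that unitary similarity is preserved under taking $m$th powers: if $P^{-1}AP=A'$ and $P^*HP=H'$, then $P^{-1}A^mP=(A')^m$ with the \emph{same} $P$, so $(A^m,H)$ and $((A')^m,H')$ have the same canonical form. Hence for the ``if'' direction it is enough to take $(A,H)=(J_n(0),\eta Q_n)$ and compute the canonical form of $((J_n(0))^m,\eta Q_n)$. I treat the substantive case $m<n$ below; when $m\ge n$ one has $a=0$ and $A^m=0$, and the same computation applies with every chain of length one.

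By the proof of Lemma~\ref{LemA^m}, writing $n=am+r$ with $0<r\le m$, the matrix $(J_n(0))^m$ has Jordan chains $C_j=(e_j,e_{j+m},e_{j+2m},\ldots)$ for $j=1,\ldots,m$, of length $a+1$ for $1\le j\le r$ and of length $a$ for $r+1\le j\le m$. As for the form, $[e_i,e_{i'}]=\eta\langle Q_ne_i,e_{i'}\rangle$ equals $\eta$ if $i+i'=n+1$ and $0$ otherwise. Since $n+1\equiv r+1\pmod m$, if $e_i\in C_j$ then $e_{n+1-i}\in C_{j'}$, where $j'$ is the representative of $r+1-j$ in $\{1,\ldots,m\}$; one checks that $j'\le r$ exactly when $j\le r$, so $j\mapsto j'$ is a length-preserving involution, acting as $j\mapsto r+1-j$ on $\{1,\ldots,r\}$ (with the single fixed point $\tfrac{r+1}{2}$ when $r$ is odd) and as $j\mapsto m+r+1-j$ on $\{r+1,\ldots,m\}$ (with fixed point $\tfrac{m+r+1}{2}$ when $m-r$ is odd).

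The heart of the argument is to identify $H$ restricted to each orbit of this involution, in the Jordan basis. Using that $[e_{j+pm},e_{j'+qm}]$ is $\eta$ exactly when $2j+(p+q)m=n+1$ (with $j+j'$ in place of $2j$ for a $2$-cycle) and $0$ otherwise, one obtains: (i) on a fixed orbit $C_j$ of length $\ell$, the chain $v_0,\ldots,v_{\ell-1}$ satisfies $[v_p,v_q]=\eta$ when $p+q=\ell-1$ and $0$ otherwise, so $H$ restricted to $\operatorname{span} C_j$ is $\eta Q_\ell$, contributing one block $\eta Q_\ell$; (ii) on a $2$-cycle $\{C_j,C_{j'}\}$ of common length $\ell$, each chain is $H$-neutral while $[v_p^{(j)},v_q^{(j')}]=\eta$ when $p+q=\ell-1$ and $0$ otherwise, so $H$ on $\operatorname{span}(C_j\cup C_{j'})$ equals $\left[\begin{smallmatrix}0&\eta Q_\ell\\ \eta Q_\ell&0\end{smallmatrix}\right]$ in the Jordan basis, and conjugation by $\tfrac1{\sqrt2}\left[\begin{smallmatrix}I_\ell&I_\ell\\ I_\ell&-I_\ell\end{smallmatrix}\right]$ (which commutes with $J_\ell(0)\oplus J_\ell(0)$) turns this into $\eta Q_\ell\oplus(-\eta Q_\ell)$, i.e.\ one positive and one negative block of size $\ell$. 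Counting over the $r$ blocks of size $a+1$: if $r$ is even there are $r/2$ two-cycles, so $r/2$ signs equal $\eta$; if $r$ is odd there are $(r-1)/2$ two-cycles and one fixed orbit, so $(r+1)/2$ signs equal $\eta$; the rest equal $-\eta$. The same count with $m-r$ replacing $r$ governs the blocks of size $a$, which is exactly the asserted pattern. For the ``only if'' direction, apply this computation blockwise to the canonical form $\bigoplus_s(J_{k_s}(0),\delta_sQ_{k_s})$ of $(A,H)$: matching the Jordan structure of $A^m$ with $\bigoplus_{i=1}^rJ_{a+1}(0)\oplus\bigoplus_{i=1}^{m-r}J_a(0)$ forces, via the block count $\sum_s\min(k_s,m)=m$ together with $\sum_sk_s=n>m$, that there is a single summand with $k_1=n$; hence $(A,H)$ has canonical form $(J_n(0),\delta Q_n)$, and comparing signs with the ``if'' direction and the uniqueness of the canonical form gives $\delta=\eta$.

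The step I expect to be the main obstacle is (i)--(ii): keeping straight which chain pairs with which, and on which anti-diagonal, so that $H$ on each orbit comes out in precisely the shape $\eta Q_\ell$ or $\left[\begin{smallmatrix}0&\eta Q_\ell\\ \eta Q_\ell&0\end{smallmatrix}\right]$ relative to the Jordan basis; once that is done, the congruence in (ii) and the parity count are routine.
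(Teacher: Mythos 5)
Your computation is correct and follows essentially the route the paper itself takes but never writes out: the paper gives no formal proof of this lemma, instead illustrating the chain-pairing and the $(v\pm w)/\sqrt{2}$ change of Jordan basis in Example~\ref{exJorChainsChange0} and in the proof of Theorem~\ref{ThmZeroEig}, and your orbit analysis of the involution $j\mapsto r+1-j$ (neutral $2$-cycles plus at most one nondegenerate fixed chain per length) is exactly that argument made precise. One small caveat, which is an imprecision in the lemma as stated rather than a flaw in your argument: the recovery of the sign $\delta=\eta$ in the ``only if'' direction only succeeds when $r$ or $m-r$ is odd (if both are even, the two choices of $\eta$ produce identical canonical forms for $(A^m,H)$), and for $m\ge n$ with $n\ge 2$ the converse fails outright since $A^m=0$ then no longer determines the Jordan structure of $A$.
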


The next two examples illustrate much of the general case to be proved after the examples.

\begin{example}\label{exJorChainsChange0}
Let the matrix $$B=\bigoplus_{i=1}^3J_4(0)\oplus\bigoplus_{i=1}^7J_3(0)\oplus\bigoplus_{i=1}^2J_2(0)$$ 
be $H$-selfadjoint where
$$H=\bigoplus_{i=1}^3\varepsilon_iQ_4\oplus\bigoplus_{i=1}^7\varepsilon_{3+i}Q_3\oplus\bigoplus_{i=1}^2\varepsilon_{10+i}Q_2$$ 
for some $\varepsilon_j=\pm1$. $B$ has Segre characteristic $4,4,4,3,3,3,3,3,3,3,2,2,0,\ldots$. We wish to determine for which values of $\varepsilon_j$ the matrix $B$ would have an $H$-selfadjoint fourth root. Thus, in terms of notation of Theorem~\ref{ThmMainThm}, $n=37$ and $m=4$. For the purpose of this example, we just consider the following grouping of the Segre characteristic into $4$-tuples:
\begin{equation}\label{eqExSegreOrdering}
(4,4,4,3),(3,3,3,3),(3,3,2,2),(0,0,0,0),\ldots.
\end{equation}
Then $p=3$ with $p$ as in Theorem~\ref{ThmMainThm}. (The other possibilities for reordering the Segre characteristic are
$(4,4,3,3),(4,3,3,3),(3,3,2,2),(0,0,0,0),\ldots$ and $(4,4,4,3),(3,3,3,2),(3,3,3,2),(0,0,0,0),\ldots$.)

Any fourth root of $B$ will be similar to $A=\bigoplus_{j=1}^q J_{n_j}(0)$  for some integers $q$ and $n_j$, since it will also be nilpotent. If $n_j=4a_j+r_j$ for some $a_j,r_j\in\mathbb{Z}$, $0<r_j\leq 4$, then by Lemma~\ref{LemA^m} we have that $A^4$ has Jordan form
\begin{equation}\label{eqExA^4}
\bigoplus_{j=1}^q\left(\bigoplus_{i=1}^{r_j}J_{a_j+1}(0)\oplus\bigoplus_{i=1}^{4-r_j}J_{a_j}(0)\right).
\end{equation} 
We also know that $A^4$ is similar to $B$ and therefore we know the number of blocks of order $4$, $3$ and $2$ in \eqref{eqExA^4}. If we restrict ourselves to the ordering \eqref{eqExSegreOrdering}, it can easily be seen that $a_1=3$, $r_1=3$, $a_2=2$, $r_2=4$, $a_3=2$ and $r_3=2$, which then give the values $n_1=15$, $n_2=12$ and $n_3=10$ from the division algorithm. Compare the exercises 6.4.10-6.4.13 in \cite{HornJohn}. Hence,  using the ordering  \eqref{eqExSegreOrdering}, any fourth root of $B$  will have the form $A=J_{15}(0)\oplus J_{12}(0)\oplus J_{10}(0)$. By Theorem~\ref{ThmCanonform}, the pair $(A,H_A)$ is in canonical form, with $H_A=\eta_1Q_{15}\oplus\eta_2Q_{12}\oplus\eta_3Q_{10}$ for some $\eta_1=\pm 1$, $\eta_2=\pm 1$ and $\eta_3=\pm 1$.  We wish to find a fourth root (which is similar to $A$) that is $H$-selfadjoint and to this end we construct a matrix $P$ satisfying \eqref{S*HAS=H} where $X=A^4$, $Y=B$, $H_X=H_A$ and $H_Y=H$.
Write down the Jordan chains of the matrix $A^4$ by using the notation in \eqref{eqJorChainsCj} adapted for more blocks:
\begin{equation*}
\begin{split}
&C_1=\{e_1,e_5,e_9,e_{13}\},\;C_2=\{e_2, e_6, e_{10}, e_{14} \},\;C_3=\{e_3,e_7,e_{11},e_{15}\},\\&C_4=\{e_4,e_8,e_{12}\};\;
C_{16}=\{e_{16},e_{20},e_{24}\},\;C_{17}=\{e_{17},e_{21},e_{25}\},\\
&C_{18}=\{e_{18},e_{22},e_{26}\},\;C_{19}=\{e_{19},e_{23},e_{27}\};\;C_{28}=\{e_{28},e_{32},e_{36}\},\\&C_{29}=\{e_{29},e_{33},e_{37}\},\;C_{30}=\{e_{30},e_{34}\},\;C_{31}=\{e_{31},e_{35}\}.
\end{split}
\end{equation*}
Note that the matrix having these Jordan chains as columns does not satisfy the equations \eqref{S*HAS=H}, since the only Jordan chains spanning $H_A$-nondegenerate spaces are $C_2$ and $C_4$. All the other Jordan chains span $H_A$-neutral spaces. Therefore a change of basis is necessary on these Jordan chains to ensure that all Jordan chains in the new basis span $H_A$-nondegenerate spaces. This is done in the following way: let $P$ be the invertible $37\times 37$ matrix 
whose columns consist of the following (new) Jordan chains
\begin{align*} C_1^+&=\{(e_1+e_3)/\sqrt{2}, (e_5+e_7)/\sqrt{2}, (e_9+e_{11})/\sqrt{2}, (e_{13}+e_{15})/\sqrt{2}\},\\ C_2&=\{e_2, e_6, e_{10}, e_{14} \},\\ C_1^-&=\{(e_1-e_3)/\sqrt{2}, (e_5-e_7)/\sqrt{2}, (e_9-e_{11})/\sqrt{2}, (e_{13}-e_{15})/\sqrt{2}\},\\ C_4 &=\{ e_4, e_8, e_{12} \},\\ C_{16}^+ &=\{ (e_{16}+e_{19})/\sqrt{2}, (e_{20}+e_{23})/\sqrt{2}, (e_{24}+e_{27})/\sqrt{2}\},\\ C_{17}^+ &=\{(e_{17}+e_{18})/\sqrt{2}, (e_{21}+e_{22})/\sqrt{2}, (e_{25}+e_{26})/\sqrt{2}\},\\ C_{17}^- &=\{(e_{17}-e_{18})/\sqrt{2}, (e_{21}-e_{22})/\sqrt{2}, (e_{25}-e_{26})/\sqrt{2}\},\\ C_{16}^- &=\{(e_{16}-e_{19})/\sqrt{2}, (e_{20}-e_{23})/\sqrt{2}, (e_{24}-e_{27})/\sqrt{2}\},\\ C_{28}^+ &=\{(e_{28}+e_{29})/\sqrt{2}, (e_{32}+e_{33})/\sqrt{2}, (e_{36}+e_{37})/\sqrt{2}\},\\ C_{28}^- &=\{(e_{28}-e_{29})/\sqrt{2}, (e_{32}-e_{33})/\sqrt{2}, (e_{36}-e_{37})/\sqrt{2}\},\\ C_{30}^+ &=\{(e_{30}+e_{31})/\sqrt{2}, (e_{34}+e_{35})/\sqrt{2}\},\\ C_{30}^- &=\{(e_{30}-e_{31})/\sqrt{2}, (e_{34}-e_{35})/\sqrt{2}\}.
\end{align*}
Then $P^{-1}A^4P=B$ holds, and comparison of entries in $P^*H_AP$ and $H$ gives us the relationship between the signs of $\varepsilon_j$ for $j=1,\ldots,12$ and those of $\eta_j$ for $j=1,2,3$. In order to determine which combinations of $\varepsilon_j$ could rise to $H$-selfadjoint $B$ with $H$-selfadjoint fourth roots, we consider all eight combinations of $\eta_j$ and determine the possible values of $\varepsilon_j$ that would correspond to these $\eta_j$ for each of the blocks associated with the $4$-tuples in  \eqref{eqExSegreOrdering}. 

Consider Table~\ref{theonlytable} which gives the signs of the blocks $Q_{n_i'}$ in $H$ corresponding to each entry $n_i'$ in the Segre characteristic of $B$ by specifying $\eta_j$ from $H_A$.\medskip

\begin{table}[h]
\begin{footnotesize}
$\begin{array}{rrr|cccc|cccc|cccc}
\multicolumn{3}{c}{4\textup{-tuples}}&4&4&4&3&3&3&3&3&3&3&2&2\\\cline{1-15}
&\varepsilon_i&&\varepsilon_1&\varepsilon_2&\varepsilon_3&\varepsilon_4&\varepsilon_5&\varepsilon_6&\varepsilon_7&\varepsilon_8&\varepsilon_9&\varepsilon_{10}&\varepsilon_{11}&\varepsilon_{12}\\
\eta_1&\eta_2&\eta_3&+\eta_1&+\eta_1&-\eta_1&+\eta_1&+\eta_2&+\eta_2&-\eta_2&-\eta_2&+\eta_3&-\eta_3&+\eta_3&-\eta_3\\\cline{1-3}
1&1&1&+&+&-&+&+&+&-&-&+&-&+&-\\
1&1&-1&+&+&-&+&  +&+&-&-& -&+&-&+\\
1&-1&1& +&+&-&+& -&-&+&+& +&-&+&- \\
1&-1&-1&+&+&-&+& -&-&+&+& -&+&-&+ \\
-1&1&1& -&-&+&-& +&+&-&-& +&-&+&-\\
-1&1&-1&-&-&+&-& +&+&-&-& -&+&-&+\\
-1&-1&1&-&-&+&-& -&-&+&+& +&-&+&- \\
-1&-1&-1&-&-&+&-& -&-&+&+& -&+&-&+
\end{array}$
\caption{Signs of all $\varepsilon_i$ corresponding to each combination of $\eta_j$}\label{theonlytable}
\end{footnotesize}
\end{table}

To explain Table~\ref{theonlytable} we look at the first row which shows that if $\eta_1$, $\eta_2$ and $\eta_3$ are all equal to $+1$, then $\varepsilon_1=\varepsilon_2=\varepsilon_4=\varepsilon_5=\varepsilon_6=\varepsilon_9=\varepsilon_{11}=+1$ and $\varepsilon_3=\varepsilon_7=\varepsilon_8=\varepsilon_{10}=\varepsilon_{12}=-1$ and that gives $H=Q_4\oplus Q_4\oplus -Q_4\oplus Q_3\oplus Q_3\oplus Q_3\oplus -Q_3\oplus -Q_3\oplus Q_3\oplus -Q_3\oplus Q_2\oplus -Q_2$.

Furthermore, we can see that for the first four choices of $\eta_1,\eta_2$ and $\eta_3$, $H$ consists of two positive $Q_4$ blocks, four positive $Q_3$ blocks and one positive $Q_2$ block. For the last four choices of  $\eta_1,\eta_2$, $\eta_3$, $H$ consists of one positive $Q_4$ block, three positive $Q_3$ blocks and one positive $Q_2$ block.

Thus for the chosen ordering \eqref{eqExSegreOrdering} the $H$-selfadjoint matrix $B$ will have an $H$-selfadjoint fourth root only if the total number of positive $Q_4$ blocks in $H$ is one or two, the total number of positive $Q_3$ blocks in $H$ is three or four, and there is only one positive $Q_2$ block in $H$.

If the pair $(B,H)$ was given, and therefore $\varepsilon_j$ is known for all $j=1,\ldots,12$ where some permutations are allowed, Table~\ref{theonlytable} then gives all possible sets of signs $\eta_j$ for $H_A$ such that $(A,H_A)$ is the canonical form for any $H$-selfadjoint fourth root of $B$, associated with the ordering \eqref{eqExSegreOrdering}, if it exists. For example if $\varepsilon_j=1$ for all $j=1,\ldots,12$, then no set of signs $\eta_j$ for $H_A$ exist, i.e.\ there does not exist an $H$-selfadjoint fourth root of $B$ associated with this ordering.

We also illustrate the use of \eqref{Bset} in this example for the ordering \eqref{eqExSegreOrdering}: $$\mathcal{B}_\nu^{(k)}=\{i\mid n'_i=\nu;\;4k-3\leq i\leq 4k\}$$
where $k=1,2,3$ and $\nu=4,3,2$ (the sizes of the blocks in $H$). Then $|{\mathcal{B}_4^{(1)}}|=3$, $|{\mathcal{B}_4^{(2)}}|=|{\mathcal{B}_4^{(3)}}|=0$, $|{\mathcal{B}_3^{(1)}}|=1$, $|{\mathcal{B}_3^{(2)}}|=4$, $|{\mathcal{B}_3^{(3)}}|=2$, $|{\mathcal{B}_2^{(1)}}|=|{\mathcal{B}_2^{(2)}}|=0$, $|{\mathcal{B}_2^{(3)}}|=2$. \qed
\end{example}

The following example shows how the sign characteristic differs by using different reorderings of the Segre characteristic.
\begin{example}
Let $B=\bigoplus_{i=1}^6J_3(0)\oplus\bigoplus_{i=1}^6J_2(0)$ and be $H$-selfadjoint where $(B,H)$ is in canonical form. Then the Segre characteristic of $B$ is $3,3,3,3,3,3,2,2,2,2,2,2,0,\ldots$. We illustrate finding the signs of the blocks in $H$ for which an $H$-selfadjoint sixth root of $B$ exists. For this we consider all of the possible reorderings of the Segre characteristic such that for each $6$-tuple, the maximum difference between any two numbers is one. In terms of the notation of Theorem~\ref{ThmMainThm}, $n=30$ and $m=6$, and in all of the reorderings $p=2$.

We follow a similar process as the one in Example~\ref{exJorChainsChange0} to determine the canonical form for $(B,H)$ that is necessary for the existence of an $H$-selfadjoint sixth root of $B$ for each possible reordering of the Segre characteristic.
\begin{enumerate}
\item Reordering: $(3,3,3,3,3,3),(2,2,2,2,2,2),(0,0,0,0,0,0),\ldots$. Canonical form of the $H$-selfadjoint sixth roots of $B$: $(J_{18}(0)\oplus J_{12}(0),\,\eta_1Q_{18}\oplus\eta_2Q_{12})$. Then
\begin{align*} H&=\eta_1Q_3\oplus\eta_1Q_3\oplus\eta_1Q_3\oplus -\eta_1Q_3\oplus -\eta_1Q_3\oplus -\eta_1Q_3\\ &\oplus  \eta_2Q_2\oplus\eta_2Q_2\oplus\eta_2Q_2\oplus -\eta_2Q_2\oplus -\eta_2Q_2\oplus -\eta_2Q_2.
\end{align*}
Thus, for any choice of $\eta_1$ and $\eta_2$, the number of positive $Q_3$ blocks in $H$ and the number of positive $Q_2$ blocks in $H$ are both three.

\item Reordering: $(3,3,3,3,3,2),(3,2,2,2,2,2),(0,0,0,0,0,0),\ldots$. Canonical form of the $H$-selfadjoint sixth roots of $B$: $(J_{17}(0)\oplus J_{13}(0),\,\eta_1Q_{17}\oplus\eta_2Q_{13})$. Then 
\begin{align*} H&=\eta_1Q_3\oplus\eta_1Q_3\oplus\eta_1Q_3\oplus -\eta_1Q_3\oplus -\eta_1Q_3\oplus \eta_1Q_2\\ &\oplus  \eta_2Q_3\oplus\eta_2Q_2\oplus\eta_2Q_2\oplus \eta_2Q_2\oplus -\eta_2Q_2\oplus -\eta_2Q_2.
\end{align*}
Thus, for the choice $\eta_1=1$ and $\eta_2=1$, the number of positive $Q_3$ blocks in $H$ and the number of positive $Q_2$ blocks in $H$ are both four. For both the choices $\eta_1=1$, $\eta_2=-1$, and $\eta_1=-1$, $\eta_2=1$, the number of positive $Q_3$ blocks and the number of positive $Q_2$ blocks are both three. If both $\eta_1$ and $\eta_2$ are chosen as $-1$, then the number of positive $Q_3$ blocks and the number of positive $Q_2$ blocks are both two.

\item Reordering: $(3,3,3,3,2,2),(3,3,2,2,2,2),(0,0,0,0,0,0),\ldots$. Canonical form of the $H$-selfadjoint sixth roots of $B$: $(J_{16}(0)\oplus J_{14}(0),\,\eta_1Q_{16}\oplus\eta_2Q_{14})$. Then
\begin{align*} H&=\eta_1Q_3\oplus\eta_1Q_3\oplus -\eta_1Q_3\oplus -\eta_1Q_3\oplus \eta_1Q_2\oplus -\eta_1Q_2\\ &\oplus \eta_2Q_3\oplus -\eta_2Q_3\oplus\eta_2Q_2\oplus \eta_2Q_2\oplus -\eta_2Q_2\oplus -\eta_2Q_2.
\end{align*}
Thus, like with the first reordering, for any choice of $\eta_1$ and $\eta_2$, the number of positive $Q_3$ blocks in $H$ and the number of positive $Q_2$ blocks in $H$ are both three.

\item Reordering: $(3,3,3,2,2,2),(3,3,3,2,2,2),(0,0,0,0,0,0),\ldots$. Canonical form of the $H$-selfadjoint sixth roots of $B$: $(J_{15}(0)\oplus J_{15}(0),\,\eta_1Q_{15}\oplus\eta_2Q_{15})$. Then 
\begin{align*} H&=\eta_1Q_3\oplus\eta_1Q_3\oplus -\eta_1Q_3\oplus \eta_1Q_2\oplus \eta_1Q_2\oplus -\eta_1Q_2\\ &\oplus \eta_2Q_3\oplus \eta_2Q_3\oplus -\eta_2Q_3\oplus \eta_2Q_2\oplus \eta_2Q_2\oplus -\eta_2Q_2.
\end{align*}
Again, the total number of positive blocks in $H$ is the same as with the second reordering. Thus, for the choice $\eta_1=1$ and $\eta_2=1$, the number of positive $Q_3$ blocks in $H$ and the number of positive $Q_2$ blocks in $H$ are both four. For both the choices $\eta_1=1$, $\eta_2=-1$, and $\eta_1=-1$, $\eta_2=1$, the number of positive $Q_3$ blocks and the number of positive $Q_2$ blocks are both three. If both $\eta_1$ and $\eta_2$ are chosen as $-1$, then the number of positive $Q_3$ blocks and the number of positive $Q_2$ blocks are both two.
\end{enumerate}
Note that we have covered all of the possibilities for the sixth root $A$ of $B$ as well as for the corresponding matrix $H_{A}$. Therefore this example shows the only options of matrices $H$ that we can start with in canonical form $(B,H)$ from which we will be able to find $H$-selfadjoint sixth roots.\qed 
\end{example}

We are now ready to prove the theorem giving the conditions of the existence of an $H$-selfadjoint $m$th root of nilpotent matrices.

\begin{proof}[Proof of Theorem~\ref{ThmZeroEig}]
Let $B$ be a nilpotent $H$-selfadjoint matrix with Segre characteristic $n_1,n_2,\ldots,n_r,0,\ldots$ and assume there exists an $H$-selfadjoint matrix $A$ such that $A^m=B$. Let $A$ be similar to $\bigoplus_{k=1}^pJ_{t_k}(0)$ for some $t_k$, then from Lemma~\ref{LemA^m} we have that the Jordan normal form of $A^m$ is equal to
\begin{equation}\label{JvanB}
J=\bigoplus_{k=1}^p\left[\bigoplus_{i=1}^{r_k}J_{a_k+1}(0)\oplus\bigoplus_{i=1}^{m-r_k}J_{a_k}(0) \right],
\end{equation} 
where $t_k=a_km+r_k$, $a_k,r_k\in\mathbb{Z}$ and $0<r_k\leq m$ by using the division algorithm. Since $A^m=B$, this matrix $J$ is also the Jordan normal form of $B$ and therefore will have the same Segre characteristic as $B$, possibly reordered. From \eqref{JvanB} one can see that this reordering, say $n'_1,n'_2,\ldots,n_{pm}',0,\ldots$, has the property that in each $m$-tuple the difference between the highest and the lowest number is at most one.

From Theorem~\ref{ThmCanonform} the pair $\left(\bigoplus_{k=1}^pJ_{t_k}(0),\bigoplus_{k=1}^p\eta_kQ_{t_k}\right)$ is in canonical form for $\eta_k=\pm 1$. Let the blocks of $H_B$ in the canonical form $(J,H_B)$ after a permutation of blocks according to the reordering $n'_1,n'_2,\ldots,n_{pm}',0,\ldots$, be given by $\varepsilon_iQ_{n'_i}$ for $\varepsilon_i=\pm 1$. The conditions on these signs can be found by a change in Jordan basis. The Jordan chains of $A^m$ which correspond to different Jordan blocks of $A$, or equivalently, to different $m$-tuples in the above reordering, are considered separately. Among the Jordan chains of $A^m$ of a certain length, say $\nu$, which correspond to a single Jordan block of $A$, there will be at most one Jordan chain spanning an $H$-nondegenerate space, and that will happen when there is an odd number of Jordan chains of this length since the other Jordan chains of length $\nu$ which do not span $H$-nondegenerate spaces, occur in pairs. By making combinations with these Jordan chains in a similar way as illustrated in Example~\ref{exJorChainsChange0}, we obtain the desired change in Jordan basis. Compare also the proof of Theorem~4.4 in \cite{BMRRR}. Each pair of Jordan chains delivers opposite signs of blocks in $H_B$ and the blocks in $H_B$ corresponding to the $H$-nondegenerate spaces will have the same sign as that obtained from the $m$th root. Hence by using $\mathcal{B}_\nu^{(k)}$ as introduced in \eqref{Bset} we can determine the number of blocks in $H_B$ for each sign. If for some $k$ the number $\big|{\mathcal{B}_\nu^{(k)}}\big|$ is even, then both the number of $i\in\mathcal{B}_\nu^{(k)}$ such that $\varepsilon_i=\eta_k$ and the number of $i\in\mathcal{B}_\nu^{(k)}$ such that $\varepsilon_i=-\eta_k$ is equal to $\frac{1}{2}\big(\big|\mathcal{B}_\nu^{(k)}\big|\big)$. If for some $k$ the number $\big|\mathcal{B}_\nu^{(k)}\big|$ is odd, then the number of $i\in\mathcal{B}_\nu^{(k)}$ such that $\varepsilon_i=\eta_k$ is $\frac{1}{2}\big(\big|\mathcal{B}_\nu^{(k)}\big|+1\big)$, and the number of $i\in\mathcal{B}_\nu^{(k)}$ such that $\varepsilon_i=-\eta_k$ is $\frac{1}{2}\big(\big|\mathcal{B}_\nu^{(k)}\big|-1\big)$. Hence the number of positive blocks in $H_B$ of size $\nu$ is equal to
\[\sum_{k=1}^p \pi_{\nu,k},\]
where $\pi_{\nu,k}$ is given by \eqref{+blocksincases}.

Conversely, suppose that $(B,H)$ is in canonical form and that it satisfies Properties 1 and 2, and suppose the reordering of the Segre characteristic of matrix $B$ that satisfies the second property is given by $n'_1,n'_2,\ldots,n_{pm}',0,\ldots$. Let $\tilde{A}=\bigoplus_{k=1}^pJ_{t_k}(0)$ where $t_k=\sum_{i=1}^mn'_{m(k-1)+i}$. If we have by the division algorithm that $t_k=a_km+r_k$, $a_k,r_k\in\mathbb{Z}$, $0<r_k\leq m$, then from Lemma~\ref{LemA^m} the matrix $\tilde{A}^m=\bigoplus_{k=1}^p(J_{t_k}(0))^m$ has the Jordan normal form
\[\bigoplus_{k=1}^p\left[\bigoplus_{i=1}^{r_k}J_{a_k+1}(0)\oplus\bigoplus_{i=1}^{m-r_k}J_{a_k}(0) \right].\]
Thus the Segre characteristic of this matrix is 
\begin{equation}\label{a_kSegre}
 (a_1+1,\ldots,a_1+1,a_1\ldots,a_1),\ldots,(a_p+1,\ldots,a_p+1,a_p,\ldots,a_p),0\ldots.
\end{equation}
Since for all $k=1,\ldots,p$ we have that
\[\sum_{i=1}^{r_k}(a_k+1)+\sum_{i=1}^{m-r_k}a_k=t_k=\sum_{i=1}^mn'_{m(k-1)+i}\quad\textup{and}\quad n'_{m(k-1)+1}-n'_{mk}\leq 1,\]
it then follows that the Segre characteristic in \eqref{a_kSegre} is equal to the sequence $n'_1,n'_{2},\ldots,n_{pm}',0,\ldots$. This means that $B$ is also the Jordan normal form of $\tilde{A}^m$ since they have the same Segre characteristic, or reordering thereof. Note also that the matrix $\tilde{A}$ is $H_A$-selfadjoint where $H_A=\bigoplus_{k=1}^p\varepsilon_kQ_{t_k}$. If we let $\varepsilon_k=\eta_k$ for each $k$ from Property $2$ which was assumed for $(B,H)$, then there exists an invertible matrix $P$, formed in the same way as explained above, such that $P^{-1}\tilde{A}^mP=B$ and $P^*H_AP=H$. Therefore, by Lemma~\ref{S-1ASlemma}, the matrix $A:=P^{-1}\tilde{A}P$ is an $H$-selfadjoint $m$th root of $B$.
\end{proof}

\subsection{The case where $B$ has only negative eigenvalues}
\label{SecNegReal}
We now look at the case where the eigenvalues of $B$ are negative real numbers, firstly for the case where $m$ is even and secondly where $m$ is odd.

Consider the following example regarding negative eigenvalues.

\begin{example}
Let $H=\varepsilon Q_2$,  where $\varepsilon=\pm 1$. Suppose that the $H$-selfadjoint matrix $$B=J_2(-1)=\begin{bmatrix}
-1&1\\0 &-1
\end{bmatrix}$$ has a square root $A$ which is $H$-selfadjoint. 
We know that $\sigma(A)\subseteq\{i,-i\}$ since $i$ and $-i$ are both square roots of $-1$. According to Theorem~\ref{ThmCanonform} the Jordan normal form of $A$ should be 
$$J=\begin{bmatrix}
i&0\\0&-i
\end{bmatrix},$$ which is $Q_2$-selfadjoint. Let $A=S^{-1}JS$ for an invertible matrix $S$, then 
\[A^2=(S^{-1}JS)^2=S^{-1}\begin{bmatrix}
-1&0\\0& -1
\end{bmatrix}S, \]
which is not similar to $B$. This gives a contradiction. Therefore $B$ does not have a square root which is $H$-selfadjoint. Note however, that $\begin{bmatrix} i & -\frac{1}{2}i \\ 0 & i\end{bmatrix}^2=B$, so $B$ does in fact have a square root.\qed
\end{example}

This illustrates the fact that there do not exist $H$-selfadjoint $m$th roots of matrices of the form $J_n(\lambda)$ where $m$ is even and $\lambda$ is a negative real number. Suppose in general that $B=J_n(\lambda)$, with $\lambda$ a negative real eigenvalue. We know from Theorem~\ref{ThmCanonform} that $(B,\varepsilon Q_n)$ is in canonical form. But the $m$th roots of $\lambda$, where $m$ is even, are all nonreal numbers, since no real number raised to the $m$th power can be negative if $m$ is even. Therefore the eigenvalues of any $m$th root $A$ of $B$ are nonreal numbers. From Theorem~\ref{ThmCanonform} we know that the Jordan normal form of $A$ should contain pairs of Jordan blocks of equal size that correspond to complex conjugate pairs. This implies that the Jordan normal form of $A$ should consist of a direct sum of at least two blocks of equal size, but such a matrix raised to the $m$th power is not similar to $B$. Hence the blocks of the canonical form of $(B,H)$ corresponding to negative real eigenvalues should occur in pairs.

Now we present a lemma that will be useful in the proof of the subsequent theorem.

\begin{lemma}\label{LemOneBlockNegEig}
Let $T$ be an upper triangular complex $n\times n$ Toeplitz matrix with $\lambda\in\mathbb{R}$ on the main diagonal, and a nonzero number on the superdiagonal. Let 
$B=T\oplus \overline{T}$. Then $B$ is $Q_{2n}$-selfadjoint, and the pair $(B, Q_{2n})$ is unitarily similar to $(J_n(\lambda) \oplus J_n(\lambda), Q_n \oplus -Q_n)$. 
\end{lemma}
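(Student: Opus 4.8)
The plan is to derive the statement from the canonical form theorem (Theorem~\ref{ThmCanonform}) by pinning down the sign characteristic of $(B,Q_{2n})$. Writing $Q_{2n}$ in $n\times n$ blocks as $\left[\begin{smallmatrix} 0 & Q_n\\ Q_n & 0\end{smallmatrix}\right]$, the relation $Q_{2n}B=B^{*}Q_{2n}$ unravels into $Q_nTQ_n=T^{T}$ and $Q_n\overline{T}Q_n=(\overline{T})^{T}$, and both hold because conjugation by the sip matrix reverses both indices of a matrix, which for a Toeplitz matrix is the same as transposing. Hence $B$ is $Q_{2n}$-selfadjoint. Moreover, since $T$ is upper triangular with the constant value $\lambda$ on the diagonal and a nonzero superdiagonal, $(T-\lambda I)^{k}$ has rank $n-k$ for $0\le k\le n$, so $T$ has a single $n\times n$ Jordan block at $\lambda$; thus $T\sim J_n(\lambda)$ and $\overline{T}\sim J_n(\overline\lambda)=J_n(\lambda)$ because $\lambda\in\mathbb R$, so $B\sim J_n(\lambda)\oplus J_n(\lambda)$. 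By Theorem~\ref{ThmCanonform}, $(B,Q_{2n})$ is then unitarily similar to $\big(J_n(\lambda)\oplus J_n(\lambda),\ \varepsilon_1Q_n\oplus\varepsilon_2Q_n\big)$ for some $\varepsilon_1,\varepsilon_2\in\{\pm1\}$, and the whole point is to show $\varepsilon_1+\varepsilon_2=0$.

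To identify the signs I would use the standard description of the sign characteristic through the Hermitian form $f(x,y)=[(B-\lambda I)^{n-1}x,y]$, which vanishes on $\ker(B-\lambda I)^{n-1}$ and so descends to the $2$-dimensional quotient $\ker(B-\lambda I)^{n}/\ker(B-\lambda I)^{n-1}$. Since unitary similarity replaces $f$ by a congruent form, the signature of $f$ on this quotient is an invariant of the pair; computed in the canonical form it has Gram matrix $\mathrm{diag}(\varepsilon_1,\varepsilon_2)$ (in the basis given by the tops of the two Jordan chains), so its signature equals $\varepsilon_1+\varepsilon_2$. On the other hand, for $B=T\oplus\overline T$ one has $(T-\lambda I)^{n-1}=c^{\,n-1}J_n(0)^{n-1}$, where $c\neq0$ is the superdiagonal entry of $T$ and $J_n(0)^{n-1}$ has its only nonzero entry a $1$ in position $(1,n)$ (all remaining terms carry a vanishing power of the nilpotent part). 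A direct computation then shows that on the two vectors $w_{\pm}=(\pm\,\overline{c}^{\,n-1}e_n,\ e_n)$, which project to a basis of the quotient, the form $f$ takes the values $\pm2\abs{c}^{2(n-1)}$. Hence $f$ is indefinite on a $2$-dimensional space, its signature is $0$, and therefore $\varepsilon_1+\varepsilon_2=0$, i.e.\ $\{\varepsilon_1,\varepsilon_2\}=\{1,-1\}$; permuting the two orthogonal components (by a permutation matrix) turns the canonical form into $\big(J_n(\lambda)\oplus J_n(\lambda),\,Q_n\oplus(-Q_n)\big)$, which is the claim.

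The crux of the argument is exactly this sign computation. The inertia of $Q_{2n}$ is $(n,n,0)$, which already forces opposite signs when $n$ is odd (then $Q_n$ and $-Q_n$ have different inertias), but it gives no information for even $n$, so one genuinely needs a finer invariant such as the signature of $f$ above. An equivalent, more hands-on route — closer in spirit to Section~\ref{SecPosReal} — is to build the transformation explicitly: take the Jordan chain $p_i=(T-\lambda I)^{n-i}v$ for $T$, with $v$ obtained by solving, one coordinate at a time, the triangular system $v^{T}Q_n(T-\lambda I)^{k}v=\delta_{k,\,n-1}$ for $k=0,\dots,n-1$ (solvable since $v^{T}Q_n(T-\lambda I)^{n-1}v=c^{\,n-1}v_n^{2}$ and each further equation is linear in the next coordinate with nonzero coefficient); then $R=[\,p_1\ \cdots\ p_n\,]$ satisfies $R^{-1}TR=J_n(\lambda)$ and $R^{T}Q_nR=Q_n$, so $S_0=R\oplus\overline R$ gives $S_0^{-1}BS_0=J_n(\lambda)\oplus J_n(\lambda)$ and $S_0^{*}Q_{2n}S_0=Q_{2n}$, and the final scalar block congruence $S_1=\left[\begin{smallmatrix} I & I\\ \tfrac12 I & -\tfrac12 I\end{smallmatrix}\right]$, which commutes with $J_n(\lambda)\oplus J_n(\lambda)$, turns $Q_{2n}$ into $Q_n\oplus(-Q_n)$.
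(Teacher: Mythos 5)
Your main argument is correct and is essentially the paper's proof: the paper also reduces the sign question to the inertia of the rank-two Hermitian matrix $Q_{2n}(B-\lambda I)^{n-1}$, whose nonzero eigenvalues $\pm(t_2\overline{t_2})^{(n-1)/2}$ force $\varepsilon_1=-\varepsilon_2$ by Sylvester's law --- the same computation as your signature of $f$ on the quotient $\ker(B-\lambda I)^{n}/\ker(B-\lambda I)^{n-1}$. Your closing explicit construction (bilinear normalization $R^{T}Q_nR=Q_n$ followed by the block congruence $S_1$) also checks out, but is not needed for the claim.
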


\begin{proof}
Let
\[
T=\begin{bmatrix} \lambda & t_2 & \cdots & t_ n \\ \ & \ddots & \ddots  & \vdots \\ \ & \ & \ddots & t_2 \\
\ & \ & \ & \lambda\end{bmatrix},
\]
with $t_2\not= 0$.
Then 
\begin{eqnarray*}
Q_{2n} B=&\begin{bmatrix} 0 & Q_{n}\\
Q_n & 0 \end{bmatrix}
\begin{bmatrix} T & 0 \\ 0 & \overline{T}\end{bmatrix} =
\begin{bmatrix} 0 & Q_n\overline{T} \\ Q_n T & 0 \end{bmatrix} \\
=& \begin{bmatrix} & \begin{matrix}
& & & \lambda \\ 
& & \iddots & \overline{t_2} \\
& \iddots & \iddots & \vdots \\
\lambda & \overline{t_2} & \cdots & \overline{t_n}
\end{matrix}
\\
\begin{matrix}
& & & {\lambda} \\ 
& & \iddots & {t_2} \\
& \iddots & \iddots & \vdots \\
{\lambda} & {t_2} & \cdots & {t_n}
\end{matrix} &
\end{bmatrix}
\end{eqnarray*}
which is clearly selfadjoint. Hence $B$ is $Q_{2n}$-selfadjoint. 

Because $t_1=\lambda$ is real and $t_2\not=0$ the Jordan normal form of both $T$ and $\overline{T}$ is $J_n(\lambda)$. Thus the canonical form of the pair $(B, Q_{2n})$ is given by $(J_n(\lambda)\oplus J_n(\lambda) , \varepsilon_1 Q_n \oplus \varepsilon_2 Q_n)$. Therefore there exists an invertible matrix $S$ such that $S^{-1}BS=J_n(\lambda)\oplus J_n(\lambda)$ and $S^*Q_{2n}S=\varepsilon_1Q_n\oplus\varepsilon_2Q_n$. By using these equations and letting $Q=\varepsilon_1Q_n\oplus\varepsilon_2Q_n$ we have the following congruence:
\begin{equation*}
S^*Q_{2n}BS=S^*Q_{2n}SS^{-1}BS=Q\,(J_n(\lambda)\oplus J_n(\lambda)),
\end{equation*}
which leads to
\begin{eqnarray*}
S^*Q_{2n}(B-\lambda I)S=Q\,(J_n(0)\oplus J_n(0)),
\end{eqnarray*}
and then by multiplying $(S^{-1}(B-\lambda I)S)^{n-2}$ from the right, we have
\begin{equation}\label{eqlem2.12}
S^*Q_{2n}(B-\lambda I)^{n-1}S=Q\,(J_n(0)\oplus J_n(0))^{n-1}.
\end{equation}
Note that the matrix $Q_{2n}(B-\lambda I)^{n-1}$  has only two nonzero entries: $t_2^{n-1}$ in the $(2n,n)$ position and its complex conjugate  in the $(n,2n)$ position. Consequently $Q_{2n}(B-\lambda I)^{n-1}$ has only one positive and one negative eigenvalue, namely $\pm(t_2\overline{t_2})^{\frac{n-1}{2}}$, and the matrix on the right side of \eqref{eqlem2.12} has eigenvalues $\varepsilon_1$ and $\varepsilon_2$. Hence by Sylvester's law of inertia  $\varepsilon_1=-\varepsilon_2$, and by reordering the Jordan blocks if necessary we have that the pair $(B, Q_{2n})$ is unitarily similar to 
$(J_n(\lambda)\oplus J_n(\lambda) , Q_n \oplus  -Q_n)$ as claimed.
\end{proof}

First we consider the case where $m$ is even.

\begin{theorem}\label{ThmNegEig}
Let $B$ be an $H$-selfadjoint matrix with a spectrum consisting of only negative real numbers. Then there exists an $H$-selfadjoint matrix $A$ such that $A^m=B$, for $m$ even, if and only if the canonical form of $(B,H)$, given by $(J,H_B)$, has the following form:
\begin{equation} \label{Jnegeiew}
J=\bigoplus_{j=1}^t\left(J_{k_j}(\lambda_j)\oplus J_{k_j}(\lambda_j)\right), \quad \lambda_j\in\mathbb{R}^-
\end{equation} and 
\begin{equation} \label{Hnegeiew}
H_B=\bigoplus_{j=1}^t\left(Q_{k_j}\oplus (-Q_{k_j})\right).
\end{equation}
\end{theorem}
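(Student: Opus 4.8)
The plan is to reduce everything, via Lemma~\ref{S-1ASlemma}, to a single conjugate pair of Jordan blocks, and then let Lemma~\ref{LemOneBlockNegEig} do the real work. The elementary fact I will use repeatedly is that for $\mu\neq 0$ the matrix $(J_k(\mu))^m$ is an upper triangular Toeplitz matrix with diagonal entry $\mu^m$ and superdiagonal entry $m\mu^{m-1}\neq 0$, and hence is similar to a single Jordan block $J_k(\mu^m)$.

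For the \emph{sufficiency} direction: assume $(B,H)$ has canonical form $(J,H_B)$ as in \eqref{Jnegeiew}--\eqref{Hnegeiew}. By Lemma~\ref{S-1ASlemma} it is enough to produce an $H$-selfadjoint $m$th root for one summand $\big(J_k(\lambda)\oplus J_k(\lambda),\,Q_k\oplus(-Q_k)\big)$ with $\lambda\in\mathbb{R}^-$. Since $m$ is even, choose a (necessarily nonreal) $m$th root $\mu$ of $\lambda$ and set $\tilde A=J_k(\mu)\oplus J_k(\bar\mu)$; a direct check, of the same type already recorded for $Q_n$-selfadjointness of Jordan blocks, shows $\tilde A$ is $Q_{2k}$-selfadjoint. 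Then $\tilde A^m=T\oplus\overline{T}$ with $T=(J_k(\mu))^m$ upper triangular Toeplitz, diagonal $\lambda\in\mathbb{R}^-$, superdiagonal $m\mu^{m-1}\neq 0$. Lemma~\ref{LemOneBlockNegEig} then provides an invertible $P$ with $P^{-1}\tilde A^mP=J_k(\lambda)\oplus J_k(\lambda)$ and $P^*Q_{2k}P=Q_k\oplus(-Q_k)$, so by Lemma~\ref{S-1ASlemma} the matrix $P^{-1}\tilde AP$ is an $H$-selfadjoint $m$th root of that block; taking the direct sum over $j=1,\dots,t$ finishes this direction.

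For the \emph{necessity} direction: suppose $A$ is $H$-selfadjoint with $A^m=B$. Because $m$ is even and $\sigma(B)\subset\mathbb{R}^-$, while $\sigma(A^m)=\{\mu^m:\mu\in\sigma(A)\}$, every eigenvalue of $A$ is nonreal; hence by Theorem~\ref{ThmCanonform} there is an invertible $S$ with $S^{-1}AS=\bigoplus_{j=1}^t\big(J_{k_j}(\mu_j)\oplus J_{k_j}(\bar\mu_j)\big)$ and $S^*HS=\bigoplus_{j=1}^t Q_{2k_j}$. Conjugating by $S$ shows $(B,H)=(A^m,H)$ is unitarily similar to $\big(\bigoplus_j(T_j\oplus\overline{T_j}),\,\bigoplus_j Q_{2k_j}\big)$, where $T_j=(J_{k_j}(\mu_j))^m$ is upper triangular Toeplitz with diagonal $\lambda_j:=\mu_j^m$ (which lies in $\sigma(B)$, so $\lambda_j\in\mathbb{R}^-$) and nonzero superdiagonal. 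Applying Lemma~\ref{LemOneBlockNegEig} to each summand, this pair is unitarily similar to $\big(\bigoplus_j(J_{k_j}(\lambda_j)\oplus J_{k_j}(\lambda_j)),\,\bigoplus_j(Q_{k_j}\oplus(-Q_{k_j}))\big)$, which is already in canonical form; uniqueness of the canonical form in Theorem~\ref{ThmCanonform} then forces exactly \eqref{Jnegeiew}--\eqref{Hnegeiew}.

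The substantive obstacle has in effect already been overcome in Lemma~\ref{LemOneBlockNegEig} (the inertia argument that pins the sign characteristic of a negative-eigenvalue block pair down to $Q_k\oplus(-Q_k)$); what remains is routine bookkeeping: confirming that $J_k(\mu)\oplus J_k(\bar\mu)$ is $Q_{2k}$-selfadjoint, that the $m$th power of a Jordan block with nonzero eigenvalue is again a single Jordan block, and that the direct-sum structure together with uniqueness of the canonical form lets us assemble the root and read off the result. The observation preceding Lemma~\ref{LemOneBlockNegEig}, that blocks at a negative eigenvalue must occur in pairs, is subsumed by the necessity argument above.
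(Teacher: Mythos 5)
Your proof is correct and follows essentially the same route as the paper: both directions hinge on Lemma~\ref{LemOneBlockNegEig} applied to $T\oplus\overline{T}$ with $T=(J_k(\mu))^m$, combined with Theorem~\ref{ThmCanonform} for necessity and Lemma~\ref{S-1ASlemma} for sufficiency. Your write-up is slightly more explicit than the paper's in verifying the Toeplitz hypotheses of Lemma~\ref{LemOneBlockNegEig}, but the argument is the same.
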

\begin{proof}
%
%
Let $B$ be an $H$-selfadjoint matrix with a spectrum consisting of only negative real numbers and assume there exists an $H$-selfadjoint matrix $A$ such that $A^m=B$, with $m$ even. 
Then the eigenvalues of $A$, which are $m$th roots of negative real numbers $\lambda_j\in\sigma(B)$, will be nonreal numbers. Since $A$ is $H$-selfadjoint, the eigenvalues of $A$ must be symmetric relative to the real axis. Therefore by Theorem~\ref{ThmCanonform} the canonical form  of $(A,H)$ is of the form 
\begin{equation*}
\left(\bigoplus_{j=1}^t\left(J_{k_j}(\mu_j)\oplus J_{k_j}(\bar{\mu}_j)\right),\,\bigoplus_{j=1}^tQ_{2k_j}\right),
\end{equation*}
where $\mu_j$ is an $m$th root of $\lambda_j\in\mathbb{R}^-$. This means that there exists an invertible matrix $S$ such that 
\[S^{-1}AS=\bigoplus_{j=1}^t\left(J_{k_j}(\mu_j)\oplus J_{k_j}(\bar{\mu}_j)\right)\quad\textup{and}\quad S^*HS=\bigoplus_{j=1}^tQ_{2k_j}.
\]
Consider $S^{-1}BS=(S^{-1}AS)^m=\bigoplus_{j=1}^t\left(\left(J_{k_j}(\mu_j)\right)^m\oplus \left(J_{k_j}(\bar{\mu}_j)\right)^m\right)$, which is $S^*HS=\bigoplus_{j=1}^t Q_{2k_j}$-selfadjoint. By Lemma~\ref{LemOneBlockNegEig} it follows that the canonical form of $(B,H)$ is 
\[\left(\bigoplus_{j=1}^t\left(J_{k_j}(\lambda_j)\oplus J_{k_j}(\lambda_j)\right), \bigoplus_{j=1}^t\left(Q_{k_j}\oplus (-Q_{k_j})\right)\right).
\]


Conversely, let $B$ be an $H$-selfadjoint matrix and assume that the canonical form of $(B,H)$ is as in \eqref{Jnegeiew} and \eqref{Hnegeiew}. We first consider the case of just one pair of blocks,
so assume 
\[
B=J_n(\lambda)\oplus J_n(\lambda)\quad\textup{and}\quad  H=Q_n\oplus (-Q_n),
\]
with $\lambda$ a negative real number. Let $\mu$ be an arbitrary $m$th root of $\lambda$, and let $\tilde{A}=J_n(\mu)\oplus J_n(\bar{\mu})$. Since $m$ is even and $\lambda<0$ we have that $\mu$ is nonreal; therefore the matrix $\tilde{A}$ is $Q_{2n}$-selfadjoint. Now, note that $\tilde{A}^m$ has $\lambda$ on the main diagonal and then Lemma~\ref{LemOneBlockNegEig} implies that the pairs $(\tilde{A}^m,Q_{2n})$ and $(B,H)$ are unitarily similar. Hence, there exists an invertible matrix $P$ such that the equations
\begin{equation*}
P^{-1}\tilde{A}^mP=B\;\;\textup{and}\;\; P^*Q_{2n}P=H
\end{equation*}
hold. From Lemma~\ref{S-1ASlemma} the matrix $A:=P^{-1}\tilde{A}P$ is an $H$-selfadjoint $m$th root of $B$. In the case where $B$ consists of more than one pair of blocks, the construction can be applied to each pair of blocks separately.
\end{proof}

For the case where $m$ is odd the following result holds.

\begin{theorem}\label{ThmNegEig-oddm}
Let $B$ be an $H$-selfadjoint matrix with a spectrum consisting only of negative real numbers. Then, for $m$ odd, there exists an $H$-selfadjoint matrix $A$ such that $A^m=B$.
\end{theorem}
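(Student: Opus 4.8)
The plan is to mimic the argument of Section~\ref{SecPosReal} almost word for word. The only feature that is special to odd $m$ — and that is absent for even $m$, compare Theorem~\ref{ThmNegEig} — is that a negative real number $\lambda$ then possesses a \emph{real} $m$th root, namely $\mu:=-\abs{\lambda}^{1/m}<0$. Using Theorem~\ref{ThmCanonform} together with Lemma~\ref{S-1ASlemma}, it is enough to produce an $H$-selfadjoint $m$th root when $(B,H)$ is a single canonical block, say $B=J_n(\lambda)$ and $H=\varepsilon Q_n$ with $\varepsilon=\pm1$ and $\lambda$ a negative real number; for a direct sum of such blocks one applies the construction to each block separately, exactly as in Section~\ref{SecPosReal}.

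First I would put $\mu:=-\abs{\lambda}^{1/m}$, so that $\mu^m=\lambda$ and $\mu\neq0$, and take $\tilde A:=J_n(\mu)$. Since $\mu$ is real, $\tilde A$ is $\varepsilon Q_n$-selfadjoint, and hence so is $\tilde A^m$. The matrix $\tilde A^m$ is upper triangular with $\lambda$ on the main diagonal and $m\mu^{m-1}$ on the superdiagonal, and $m\mu^{m-1}\neq0$ because $\mu\neq0$ (in fact $\mu^{m-1}>0$ since $m-1$ is even). Hence $\tilde A^m-\lambda I$ has rank $n-1$, so the Jordan normal form of $\tilde A^m$ equals $B$.

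It then remains to find an invertible $P$ with $P^{-1}\tilde A^mP=B$ and $P^*(\varepsilon Q_n)P=H$, the latter being equivalent to $P^*Q_nP=Q_n$. As in Section~\ref{SecPosReal}, such a $P$ must have the form~\eqref{eqPform} with $y\in\textup{Ker}(\tilde A^m-\lambda I)^n=\mathbb{C}^n$ but $y\notin\textup{Ker}(\tilde A^m-\lambda I)^{n-1}$, and then $P^*Q_nP$ is an anti-lower-triangular Hankel matrix, so that $P^*Q_nP=Q_n$ reduces to a triangular system of the type~\eqref{eqns}: the $j$th equation involves only $y_n,\dots,y_{n-j+1}$, the first equation fixes $\abs{y_n}$ through the nonzero coefficient $(m\mu^{m-1})^{n-1}$, and each subsequent equation is linear in the next unknown. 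Solving these equations from the top down yields a vector $y$, hence a matrix $P$, and Lemma~\ref{S-1ASlemma} then gives that $A:=P^{-1}\tilde A P$ is an $H$-selfadjoint matrix with $A^m=B$. I expect no real obstacle in the argument: for negative eigenvalues the entire difficulty lies in the parity of $m$, and once the real root $\mu$ is available the analysis is identical to the positive-eigenvalue case, with no pairing of blocks forced and no restriction on the sign characteristic of $H$.
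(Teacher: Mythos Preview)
Your proposal is correct and follows essentially the same approach as the paper: reduce to a single canonical block $B=J_n(\lambda)$, $H=\varepsilon Q_n$, take $\tilde A=J_n(\mu)$ with $\mu$ the real $m$th root of $\lambda$, and then repeat verbatim the construction of $P$ from Section~\ref{SecPosReal} before invoking Lemma~\ref{S-1ASlemma}. Your explicit remark that $m-1$ is even, hence $m\mu^{m-1}>0$, is a useful addition since it makes clear that the first equation in~\eqref{eqns} is indeed solvable for a real $y_n$.
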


\begin{proof}
Let $B$ be an $n\times n$ $H$-selfadjoint matrix with negative real eigenvalues. Assume that the pair $(B,H)$ is in canonical form and that $B$ consists of a single Jordan block, i.e.
\[B=J_{n}\left(\lambda\right)\quad\textup{and}\quad H=\varepsilon Q_{n},
\]
where $\varepsilon=\pm 1$ and $\lambda$ is a negative real number. Let $\mu$ be the real $m$th root of $\lambda$, and let $\tilde{A}=J_{n}(\mu)$. Then the Jordan normal form of $\tilde{A}^m$ is equal to $B$. Note that the matrix $\tilde{A}$ is $H_A$-selfadjoint where $H_A=\delta Q_{n}$, with $\delta=1$ or $\delta=-1$. Take $\delta=\varepsilon$.  Similarly as in Section~\ref{SecPosReal}, construct an invertible matrix $P$  such that the equations
\begin{equation*}
P^{-1}\tilde{A}^mP=B\;\;\textup{and}\;\; P^*H_AP=H
\end{equation*}
hold.  Finally, it follows from Lemma~\ref{S-1ASlemma} that the matrix $A:=P^{-1}\tilde{A}P$ is an $H$-selfadjoint $m$th root of $B$. In the case where $B$ consists of more than one block, the construction is applied to each block separately.
\end{proof}

We illustrate Theorem~\ref{ThmNegEig-oddm} with the following example.
\begin{example}
Let $m=5$ and let the matrices $B$ and $H$ be given by
\[B=\begin{bmatrix}
-1&1&0\\
0&-1&1\\
0&0&-1
\end{bmatrix}\quad\textup{and}\quad  H=Q_3=\begin{bmatrix}
0&0&1\\0&1&0\\1&0&0
\end{bmatrix}.\]
Then $(B,H)$ is in canonical form. The eigenvalues of any fifth root of the matrix $B$ are fifth roots of $-1$. 
Construct the following matrix by using the real fifth root of $-1$, i.e.\ let $\tilde{A}=J_3(-1)$. Note that $\tilde{A}$ is $H_A$-selfadjoint, where $H_A= Q_3$. Then 
\[\tilde{A}^5=\begin{bmatrix}
-1&5&-10\\0&-1&5\\0&0&-1
\end{bmatrix}.\]
Using the notation in Section~\ref{SecPosReal}, one can easily see that 
\begin{equation*}
p_1=\begin{bmatrix}
25y_3\\0\\0
\end{bmatrix},\;\;p_2=\begin{bmatrix}
5y_2-10y_3\\5y_3\\0
\end{bmatrix}\;\;\textup{and}\;\;p_3=y=\begin{bmatrix}
y_1\\y_2\\y_3
\end{bmatrix},
\end{equation*}
so we obtain the equations 
\begin{equation*}
1=\phi_1(y)=y^*H_A(\tilde{A}^5+I)^2y=25\bar{y}_3y_3,
\end{equation*}
\begin{equation*}
0=\phi_2(y)=y^*H_A(\tilde{A}^5+I)y=5\bar{y}_3y_2-10\bar{y}_3y_3+5\bar{y}_2y_3,
\end{equation*}
\begin{equation*}
0=\phi_3(y)=y^*H_Ay=\bar{y}_3y_1+\bar{y}_2y_2+\bar{y}_1y_3.
\end{equation*}
One solution of these equations is the real vector $y$ where $y_3=1/5$, $y_2=1/5$ and $y_1=-1/10$. Thus we have a matrix
\[P=\begin{bmatrix}
5&-1&\frac{-1}{10}\\0&1&\frac{1}{5}\\0&0&\frac{1}{5}
\end{bmatrix}\]
such that the equations $P^{-1}\tilde{A}^5P=B$ and $P^*H_AP=H$ hold. Finally, we note that the matrix 
\[A=P^{-1}\tilde{A}P=\begin{bmatrix}
-1&\frac{1}{5}&\frac{2}{25}\\0&-1&\frac{1}{5}\\ 0&0&-1
\end{bmatrix}\]
is an $H$-selfadjoint fifth root of $B$.\qed
\end{example}


\paragraph{Acknowledgements:}
This work is based on research supported in part by the DSI-NRF Centre of Excellence in Mathematical and Statistical Sciences (CoE-MaSS). Opinions expressed and conclusions arrived at are those of the authors and are not necessarily to be attributed to the CoE-MaSS.





\end{document}